\newtheorem{lemma}{Lemma}[section]
\def\today{\ifcase\month\or
	January\or February\or March\or April\or May\or June\or
	July\or August\or September\or October\or November\or December\fi
	\space\number\day, \number\year}
\newtheorem{theorem}{Theorem}
\theoremstyle{definition}
\theoremstyle{remark}
\newcommand{\R}{\mathbb{R}}
\newcommand{\N}{\mathbb{N}}
\newcommand{\Z}{\mathbb{Z}}
\newcommand{\hh}{\tfrac12}
\newcommand{\ds}{\text{\rm d}s}
\newcommand{\dt}{\text{\rm d}t}
\renewcommand{\d}{\text{\rm d}}
\newcommand{\du}{\text{\rm d}u}
\newcommand{\dx}{\text{\rm d}x}
\newcommand{\dy}{\text{\rm d}y}
\newcommand{\dnu}{\text{\rm d}\nu}
\newcommand{\re}{{\rm Re}\,}
\begin{document}
	\title[On Goldbach numbers in short intervals]{On Goldbach numbers in short intervals}
	\author[Chirre and Hagen]{Andr\'{e}s Chirre and Markus Valås Hagen}
	\subjclass[2020]{11M26, 11P32}
	\address{Departamento de Ciencias - Sección Matemáticas, Pontificia Universidad Católica del Perú, Av. Universitaria 1801, San Miguel 15088, Lima, Perú}
	\email{cchirre@pucp.edu.pe}
	\address{Department of Mathematical Sciences, Norwegian University of Science and Technology (NTNU), 7491 Trondheim, Norway} 
	\email{markus.v.hagen@ntnu.no}
	
	\thanks{MVH is supported in part by Grant 334466 of the Research Council of Norway.}
	
	\allowdisplaybreaks
	\numberwithin{equation}{section}
	
	\maketitle  
	
	\begin{abstract}
		Assuming the Riemann \replaced{H}{h}ypothesis, we prove that for all $x\geq 2$, there exists at least one even integer within the interval $(x, x+123\log^2x]$, that can be expressed as the sum of two primes. This result is an improvement over the recent work of Cully-Hugill and Dudek, who obtained the constant 
		$9696$ instead of $123$.
	\end{abstract}
	%\tableofcontents

	\vspace{0.5cm}
	
	%\markus{New title suggestion: On Goldbach numbers in short explicit intervals.}
	
	\section{Introduction}
	One of the most famous open problems in number theory is the Goldbach conjecture. It states that every even number greater than $2$ can be written as a sum of two primes. Although the conjecture still remains open, a significant amount of progress has been made towards it. One of them is the big breakthrough achieved by Helfgott \cite{HaraldG}, \added{building on work of Vinogradov}, who proved the weak Goldbach conjecture - the statement that every odd number greater than $5$ can be written as a sum of three primes.
	
	%Montgomery--Vaughan \cite{MV2} showed in the 1970's that the density of the numbers where Goldbach possibly fails, is $0$. This was already known due to Hardy--Littlewood \cite{HardyLittlewood}, on the assumption of the Riemann Hypothesis (RH). 

	Since the original Goldbach conjecture still seems to be out of reach, it is natural to ask on how short intervals we can exhibit a number that is the sum of two primes. To make this precise, \replaced{the notion of a Goldbach number has been introduced}{we introduce the notion of a Goldbach number} - this is a positive integer that is the sum of two odd primes. The Goldbach conjecture is the statement that there is a Goldbach number in any interval $(x,x+2]$ for any $x> 4$. If we consider the interval $(x,x+H]$, how small can we take $H$ and still ensure the existence of a Goldbach number in our interval?
	
	Using strong unconditional results on zeroes of $L$-functions due to Gallagher \cite{Gallagher}, Montgomery--Vaughan \cite{MV2} were able to prove that there is a Goldbach number in the interval $(x,x+x^{\frac{7}{72}+\varepsilon})$. Assuming the Riemann \replaced{H}{h}ypothesis (RH), the first result on Goldbach numbers in short intervals was obtained by Linnik in \cite{Linnik}, where he proved that $[x,x+\log^{3+\varepsilon}x]$ contains a Goldbach number for sufficiently large $x$. Later, Katai \cite{KATAI} and Montgomery--Vaughan \cite{MV2}, improved the interval independently to $[x,x+H]$ where $H=O(\log^{2}x)$, and with $x$ still sufficiently large. \added{The order of $H$ hasn't been improved since.} %For both these two works, the result follows rather easily from an estimate on primes in short intervals due to Selberg \cite{S}. Results of the same strength, but that are more close to the approach of Linnik, appears in \cite{GOLD} and \cite{PERELLI}. 
	
	In a recent paper \cite{Dudek}, Cully-Hugill and Dudek made the \replaced{method}{proof} of Montgomery and Vaughan explicit. This allowed them to prove that there is a Goldbach number in the interval $(x, x+9696\log^2x]$ for all $x\geq 2$, assuming RH. The heart of their proof lies in finding an explicit bound for an integral, first studied by Selberg in  \cite{S}. For $x\geq 1$ and $\delta>0$, it is defined by
	\begin{align} \label{8_19pm}
		J_\theta(x,\delta)=\int_{1}^{x}\big(\theta((1+\delta)y)-\theta(y)-\delta y\big)^2\dy,
	\end{align}
	where $\theta(x)=\sum_{p\leq x}\log p$. Assuming RH, Cully-Hugill--Dudek proved that $J_\theta(x,\delta)<202\delta x^2\log^2x$, for $x\geq 10^8$ and $\delta\in (0,10^{-8}]$. Their method focuses on bounding the second moment of the logarithmic derivative of the Riemann zeta-function in the critical strip, which they control explicitly via Selberg's moment formula.

	The purpose of this paper is to improve on the interval $(x,x+9696\log^2x]$. We \replaced{establish}{prove} the following result.
	\begin{theorem} \label{6_35pm2}
		Assume \replaced{RH}{the Riemann hypothesis}. \added{Then there} is a Goldbach number in the interval $(x, x+123\log^2x]$ for all $x\geq 2$.
	\end{theorem}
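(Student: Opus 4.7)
The plan is to follow the overall framework of Montgomery--Vaughan \cite{MV2} as made explicit by Cully-Hugill--Dudek, and to squeeze out the factor of roughly $80$ separating $9696$ from $123$ by a sharper quantitative analysis of the key analytic input, namely Selberg's integral $J_\theta(x,\delta)$ defined in \eqref{8_19pm}. The argument is a proof by contradiction: assume that $(x, x+H]$ contains no Goldbach number, with $H = 123\log^2 x$ and $x$ sufficiently large; then the Goldbach-counting sum $R(x,H) = \sum_{x < n \leq x+H}\sum_{p_1 + p_2 = n}\log p_1 \log p_2$ vanishes. On the other hand, writing $R(x,H)$ as a convolution of $\theta'$ with itself, isolating the main term of size $\asymp Hx$, and applying Cauchy--Schwarz bring $J_\theta(x,\delta)$ into play with $\delta \asymp H/x$; the non-vanishing of the main term then forces a lower bound $J_\theta(x,\delta) \gg \delta x^2 \log^2 x$ with an explicit constant depending on $H/(\log^2 x)$.

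Upper bounds on $J_\theta(x,\delta)$ are obtained, as in \cite{Dudek} and Selberg's original work \cite{S}, by expanding the square in \eqref{8_19pm}, inserting the truncated explicit formula for $\theta$, and reducing matters to a weighted second-moment estimate for $\zeta'/\zeta$ on a line just to the right of the critical line. Cully-Hugill--Dudek obtain $J_\theta(x,\delta) < 202\,\delta x^2 \log^2 x$ through relatively direct estimations at each step of this chain. My intended improvement lies in tightening these estimates: replacing triangle-inequality bounds by sharp explicit forms of Selberg's mean-value theorem for $\zeta'/\zeta$, using optimised explicit zero-sum estimates under RH, and jointly optimising the truncation height, the abscissa, and any smoothing parameter chosen in the argument. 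The target is a substantially smaller constant $C$ in $J_\theta(x,\delta) < C\,\delta x^2 \log^2 x$, which on propagation through the Montgomery--Vaughan inequality yields the advertised constant $123$.

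The principal obstacle is the explicit bookkeeping: several intermediate inequalities contribute to the final constant, and these have to be optimised simultaneously rather than one at a time, since a gain in one parameter can easily be offset by a loss in another. A secondary issue is the range of validity: the analytic argument is asymptotic and requires $x$ beyond some explicit threshold (with $\delta$ correspondingly small), so the statement for all $x \geq 2$ has to be closed off by a direct numerical verification of the existence of Goldbach numbers in short intervals up to that threshold, in the spirit of \cite{Dudek}.
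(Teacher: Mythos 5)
Your overall architecture matches the paper: derive a contradiction from an explicit bound of the form $J_\theta(x,\delta) \leq C\,\delta\,\log^2(\ell/\delta)\,x^2$ via the Montgomery--Vaughan argument as made explicit by Cully-Hugill--Dudek, then close off small $x$ with the Herzog--Oliveira e Silva--Pardi numerical verification of Goldbach up to $4\cdot10^{18}$. But the central step---how to actually gain the factor $\approx 80$ in the constant for $J_\theta$---is where your plan diverges from the paper and where you are missing the concrete ideas.

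You propose to follow the Cully-Hugill--Dudek strategy of inserting the explicit formula for $\theta$ into $J_\theta$ and controlling a weighted second moment of $\zeta'/\zeta$ via Selberg's moment formula, hoping to recover the factor of $80$ by \emph{tightening} intermediate inequalities and jointly optimising parameters. The paper explicitly abandons that route. Instead, it first bounds the $\psi$-analogue $J_\psi(x,\delta)$ by expanding the zero-sum explicit formula for $\psi$ and applying the Saffari--Vaughan averaging device (averaging the endpoints $x\nu/\lambda$ and $\kappa x\nu$ over $\nu\in[1,\lambda]$, which produces an extra factor $|2+\rho_1+\overline{\rho_2}|^{-1}$ that tames the off-diagonal part of the double zero sum), yielding $J_\psi \leq 2.2258\,\delta\,\log^2(2.0001/\delta)\,x^2$. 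It then passes from $J_\psi$ to $J_\theta$, and this passage is the novel and delicate part: $\psi(y)-\theta(y)-\sqrt{y}+1$ is written via Perron's formula on $\operatorname{Re}(s)=\tfrac12$, and Plancherel's theorem converts the error integral into a weighted $L^2$ integral of $F(t)=-\tfrac{\zeta'}{\zeta}(1+2it)-\tfrac{1}{2it}+\sum_p \tfrac{\log p}{p^{1/2+it}(p^{1+2it}-1)}$, estimated with a Backlund-style bound, a conditional mean square of $\zeta'/\zeta$ at abscissa $1$, and explicit prime sums. The naive expectation that $\psi-\theta$ (a sum over prime powers $p^k$, $k\geq2$) is trivially negligible fails precisely because one works in the short interval $[y,(1+\delta)y]$.

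So your sketch names the right object and the right size of the gain, but it does not contain the two ingredients---Saffari--Vaughan averaging for $J_\psi$, and the Perron/Plancherel bridge from $J_\psi$ to $J_\theta$---that make the constant $2.5571$ actually attainable. ``Joint optimisation'' of the Cully-Hugill--Dudek chain is an aspiration rather than an argument, and the authors' decision to change methods (rather than sharpen the old one) is itself evidence that the direct route does not bridge the distance from $202$ to $2.5571$.
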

	
	\replaced{Ultimately, we will deduce Theorem \ref{6_35pm2} from an explicit bound for $J_\theta(x,\delta)$, like Cully-Hugill--Dudek. However, our approach for bounding this quantity differs significantly from theirs.}{Our approach also requires bounding \( J_\theta(x,\delta) \), but it differs from that of Cully-Hugill--Dudek.} We work directly with the zeros of the Riemann zeta-function, employing the explicit formula for the Chebyshev function $
	\psi(x)=\sum_{n\leq x}\Lambda(n), 
	$
	where $\Lambda(n)$ is the von Mangoldt function defined by $\Lambda(n)=\log p$ if $n=p^k$ for a prime $p$ and $k\in \N$, and $\Lambda(n)=0$ otherwise. Then, we use an averaging technique introduced by Saffari and Vaughan in \cite{SV}, to bound\footnote{In fact, in \cite{SV}, it is proved that $J_\psi(x,\delta)\ll \delta x^2 \log^2({2}/{\delta})$ for $x\geq 4$ and $0<\delta\leq 1$.} explicitly
	\begin{align}  \label{10_48pm}
		J_\psi(x,\delta): 
		=\int_{1}^{x}\big(\psi((1+\delta)y)-\psi(y)-\delta y\big)^2\dy.
	\end{align}
	The rest of the proof is then devoted to pass from $J_\psi(x,\delta)$ to $J_\theta(x,\delta)$ which is a surprisingly more delicate process than one would anticipate. This \replaced{seems to be}{is} mainly because we are working with primes in short intervals.
	
	The structure of the paper is as follows. In Section~\ref{10_38pm}, we present several explicit estimates for some objects related to the Riemann zeta-function and its non-trivial zeros. Section~\ref{10_39pm} is devoted to establishing an explicit bound for $J_\psi(x,\delta)$ through the averaging method. Building upon this result, in Section~\ref{10_39pm22} we derive a bound for $J_\theta(x,\delta)$ by carefully analyzing the associated error term. In Section~\ref{10_39pm2}, we prove Theorem~\ref{6_35pm2} as a consequence of the explicit estimate for $J_\theta(x,\delta)$. Finally, in Section~\ref{6_46AM} we write \replaced{an appendix}{and Appendix} with some explicit bounds for certain sums of prime numbers that appears in our proof.

	\section*{Acknowledgements}
	We would like to thank Adrian Dudek, Winston Heap, Harald Helfgott, Kristian Seip, Timothy Trudgian and Denis Zelent %with help in veriyfing some of the numerical parts, appearing in the appendix. 
	for their valuable suggestions on earlier versions of the paper. Some of the work present in this paper was carried out while MVH was visiting Louis-Pierre Arguin at the University of Oxford in Spring 2025. He would like to thank the university for the hospitality and the excellent working conditions provided there.

	%The following lemma is somewhat technical, but it will help simplify our computations.
	%\begin{lemma}
	%	Let $f,g:[a,b]\to\C$ two integrable functions. Then, for any $\eta>0$ we have
	%	$$
	%	\int_{a}^{b}|f(x)+g(x)|^2\dx\leq (1+\eta)	\int_{a}^{b}|f(x)|^2\dx +  \left(1+\dfrac{1}{\eta}\right)\int_{a}^{b}|g(x)|^2\dx.
	%	$$
	%\end{lemma}
	%\begin{proof}
	%	This result is a straightforward consequence of the MA–MG inequality.
	%\end{proof}

	\vspace{0.5cm}
	%The paper is organized as follows. Some explicit estimates related to the non-trivial zeros of the Riemann zeta-function are given in Section \ref{10_38pm}. In Section \ref{10_39pm} we establish the explicit bound for $J_\psi(x,\delta)$ using the averaging technique. In Section \ref{10_39pm22} we obtain the bound for $J_\theta(x,\delta)$, using the bound for $J_\psi(x,\delta)$ and bounding in a deep way the error term. Finally, in Section \ref{10_39pm2} we deduce Theorem \ref{6_35pm2} from the explicit bound for $J_\theta(x,\delta)$.
	%	(see \cite[Lemma 3.11, p. 67]{HaraldG})
	%	\begin{align*} %\label{3_53pm2}
		%		\frac{\Gamma'}{\Gamma}(z)= \log z -\frac{1}{2z}+ O^*\left(\dfrac{1}{4|z|^2}\right)\!, \,\,\,\,\,\mbox{for}\,\,\,\, \re{z}\geq 0,
		%	\end{align*}
	%	it follows that for $\alpha\geq 0$ and $t\neq 0$
	%	\begin{align*}
		%		\re\dfrac{\Gamma'}{\Gamma}\bigg(\dfrac{\alpha%}{2}+1+\dfrac{it}{2}\bigg)\leq \log |t| - \log 2 + \frac{(\alpha+2)^2}{2t^2}  - \frac{\alpha+1}{(\alpha+2)^2+t^2}.
		%	\end{align*}

	\section{Lemmas related to the Riemann zeta-function and its zeros} \label{10_38pm}
	
	Let $\zeta(s)$ be the Riemann zeta-function, and assume \replaced{RH}{Riemann hypothesis}, i.e. all non-trivial zeros of $\zeta(s)$ have the form $\rho=\hh +i\gamma$ where $\gamma\in \R$.
	
	%\begin{lemma} \label{4_23pm} Let $N(t)$ the number of non-trivial zeros of $\zeta(s)$ with imaginary part $0<\gamma\leq T$. Then
	%	$$
	%	N(t+1)-N(t)\leq \log t,
	%	$$
	%	for all $t\geq e$.
	%\end{lemma} 
	
	\begin{lemma} \label{7_23AM} Assume \replaced{RH}{the Riemann hypothesis}. Then, for $1\leq \sigma\leq 2$ and $|t|\geq 100$ we have\footnote{\added{We should mention that one can do better - }\replaced{t}{T}he classical conditional bound for the logarithmic derivative of the Riemann zeta-function at the point $1+it$ is $O(\log \log t)$. The best explicit result is given by the authors and Simoni\v{c} in \cite[Theorem 5]{ChiVaSI}, \replaced{but this is only useful}{which is useful} when $t$ is really large.} 
		$$
		\left|\dfrac{\zeta'}{\zeta}(\sigma+it)\right|\leq  4\log |t|.
		$$
	\end{lemma}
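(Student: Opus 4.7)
My plan is to compare $\frac{\zeta'}{\zeta}$ at the target $s=\sigma+it$ against its value at the anchor point $s_0 = 2+it$, where the Dirichlet series converges absolutely. Starting from the Hadamard product expansion
$$
\frac{\zeta'}{\zeta}(s)+\frac{1}{s-1} \;=\; B + \sum_{\rho}\left(\frac{1}{s-\rho}+\frac{1}{\rho}\right) - \frac{1}{2}\frac{\Gamma'}{\Gamma}\!\left(\frac{s}{2}+1\right) + \frac{1}{2}\log\pi,
$$
subtracting the identity at $s_0$ kills $B$, the $1/\rho$ terms, and $\tfrac{1}{2}\log\pi$, leaving
$$
\frac{\zeta'}{\zeta}(s) \;=\; \frac{\zeta'}{\zeta}(s_0) + \sum_{\rho}\!\left(\frac{1}{s-\rho}-\frac{1}{s_0-\rho}\right) + \Delta_{\Gamma}(s,s_0) + \frac{1}{s_0-1} - \frac{1}{s-1}.
$$
The anchor term is trivially controlled by $|\zeta'/\zeta(2+it)| \le -\zeta'(2)/\zeta(2) < 0.57$; the pole difference is $O(1/|t|)$; and Stirling's formula gives $|\Delta_\Gamma(s,s_0)| = O(1/|t|)$. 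All of these are negligible once $|t|\ge 100$.

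The real work is the sum over zeros, which I would split into \emph{near} zeros ($|\gamma-t|\le 1$) and \emph{far} zeros ($|\gamma-t|>1$). Under RH, for a near zero, $|s-\rho|=\sqrt{(\sigma-\tfrac12)^2+(t-\gamma)^2}\ge \tfrac12$ and $|s_0-\rho|\ge \tfrac32$, so each such term contributes at most $2+\tfrac23 = \tfrac{8}{3}$. Counting the near zeros via an explicit Riemann--von Mangoldt bound of the shape $N(t+1)-N(t-1) \le \tfrac{1}{\pi}\log|t|+O(1)$ (for instance Trudgian's explicit estimate) bounds the near sum by $\tfrac{8}{3\pi}\log|t| + O(1) \approx 0.85\log|t|+O(1)$. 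For the far sum I would use the telescoping bound
$$
\left|\frac{1}{s-\rho}-\frac{1}{s_0-\rho}\right| \;=\; \frac{|2-\sigma|}{|s-\rho||s_0-\rho|} \;\le\; \frac{1}{(t-\gamma)^2},
$$
and then control $\sum_{|\gamma-t|>1}(t-\gamma)^{-2}$ via the real-part identity
$$
\sum_{\rho}\frac{3/2}{9/4+(t-\gamma)^2} \;=\; \tfrac{1}{2}\log|t| + O(1),
$$
which itself drops out of taking real parts of the Hadamard expansion at $s_0$. A rough tally gives a far contribution of at most $\tfrac{13}{12}\log|t| + O(1) \approx 1.08\log|t|+O(1)$.

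Assembling the pieces yields a bound of the form $C\log|t| + D$ with $C$ heuristically around $2$, well under $4$; the hypothesis $|t|\ge 100$ (so $\log|t|\ge 4.6$) then gives ample slack in which to absorb $D$ into the remaining budget $(4-C)\log|t|$. The main obstacle is not conceptual but arithmetical: I need to pin down precise explicit constants in the Riemann--von Mangoldt estimate and in the Stirling expansion for $\Gamma'/\Gamma$, and to tune the split radius if necessary so that the final constant comes in at most $4$. The fact that the truth is $O(\log\log|t|)$ (noted in the footnote) confirms that the bound $4\log|t|$ is generous and that no delicate idea should be required beyond the standard critical-strip argument.
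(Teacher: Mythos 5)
Your route is genuinely different from the paper's. You propose the classical Hadamard/partial-fraction argument: subtract the identities at $s=\sigma+it$ and at the anchor $s_0=2+it$, split the zero sum at $|\gamma-t|=1$, and control the pieces via a zero-count and the real-part identity $\sum_\rho \tfrac{3/2}{9/4+(t-\gamma)^2}=\tfrac12\log|t|+O(1)$. The paper instead uses Backlund's Dirichlet-series approach: it writes
$$
-\dfrac{\zeta'}{\zeta}(\sigma+it)=\sum_{n<N}\dfrac{\Lambda(n)}{n^{\sigma+it}}-\dfrac{\psi(N^-)-N}{N^{\sigma+it}}+\dfrac{N^{1-(\sigma+it)}}{\sigma+it-1}+(\sigma+it)\int_N^\infty\dfrac{\psi(y)-y}{y^{\sigma+it+1}}\,\dy,
$$
controls the $\psi$-terms with Schoenfeld's explicit RH bound $|\psi(y)-y|\le\sqrt{y}\log^2 y/(8\pi)$, and then balances by taking $N=[|t|^4]+1$ — the constant $4$ in $4\log|t|$ is literally $\log N\approx 4\log|t|$. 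The advantage of the paper's route is that it needs no explicit zero-counting at all; the advantage of yours is that, done carefully, it should give a smaller leading constant. Two comments on the bookkeeping. First, your near-zero bound of $8/3$ per zero is not tight: the telescoping identity $|\tfrac{1}{s-\rho}-\tfrac{1}{s_0-\rho}|=\tfrac{|s_0-s|}{|s-\rho|\,|s_0-\rho|}\le\tfrac{1}{(1/2)(3/2)}=\tfrac{4}{3}$ is available for near zeros too, and you should use it. Second, the $O(1)$ you wave away is not obviously small enough at the threshold $|t|=100$ (where $4\log|t|\approx 18.4$): if you count near zeros by an explicit Riemann--von Mangoldt formula, the $S(T)$ error enters twice and contributes both an extra $\sim 0.22\log|t|$ and a constant on the order of $5$--$7$, so the slack at $|t|=100$ is thinner than "ample." A cleaner route that avoids the $S(T)$ constants entirely is to count the near zeros through the same real-part identity you use for the far ones (each near zero has $\tfrac{3/2}{9/4+(t-\gamma)^2}\ge\tfrac{6}{13}$); with the $\tfrac43$ near bound this delivers roughly $\tfrac{91}{36}\log|t|+O(1)\approx 2.5\log|t|+O(1)$ for the zero sum, which does close comfortably below $4\log|t|$ for $|t|\ge100$.
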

	\begin{proof}
		We adopt \deleted{the} Backlund's approach \replaced{from}{in} \cite{Backlund}, \replaced{where he}{who} established bounds for $|\zeta(\sigma + it)|$. Let $N\geq 3$, $1<\sigma<2$ and $|t|\geq 100$. By the Dirichlet series representation of $\zeta'/\zeta$, and integration by parts, one can see that
		\begin{align} \label{12_16_29_04}
			-\dfrac{\zeta'}{\zeta}(\sigma+it)=\sum_{n=1}^{N-1}\dfrac{\Lambda(n)}{n^{\sigma+it}} - \dfrac{\psi(N^{-})-N}{N^{\sigma+it}} + \dfrac{N^{1-({\sigma+it})}}{{\sigma+it}-1} + ({\sigma+it})\int_{N}^\infty\dfrac{\psi(y)-y}{y^{{\sigma+it}+1}}\dy.
		\end{align}	
		Let us bound each term on the right-hand side of \eqref{12_16_29_04}. By \cite[Lemma 10]{JRT} we get
		$$\left|\sum_{n=1}^{N-1}\dfrac{\Lambda(n)}{n^{\sigma+it}}\right|\leq \sum_{n=1}^{N-1}\dfrac{\Lambda(n)}{n}\leq \log (N-1)-\gamma + \dfrac{1.3}{\log^2(N-1)}.$$
		
		\replaced{We now proceed to bound the other terms appearing in \eqref{12_16_29_04}. First we observe the trivial bound}{On the other hand, clearly} 
		$
		|{N^{1-({\sigma+it})}}/{({\sigma+it}-1)}|\leq 1/|t|.
		$
		To bound the \replaced{two}{other} terms involving $\psi$, we recall an explicit \replaced{conditional bound}{version} of the \replaced{error term in the prime number theorem}{prime number theorem error term} \deleted{under the Riemann hypothesis} (see \cite[Theorem 10]{Sch}): for all $y\ge 73.2$ we have
		$|\psi(y)-y|\leq{\sqrt{y}\log^2y}/{8\pi}$. \replaced{Thus, if $N\neq p^k$ and $N\geq 74$, we have}{Now, suppose that $N\neq p^k$ and $N\geq 74$. Thus}
		$$
		\left| \dfrac{\psi(N^{-})-N}{N^{\sigma+it}}\right|\leq \dfrac{\log^2N}{8\pi\sqrt{N}}.
		$$
		We bound the last term as follows
		$$
		\left|({\sigma+it})\int_{N}^\infty\dfrac{\psi(y)-y}{y^{{\sigma+it}+1}}\dy\right|\leq \dfrac{|{\sigma+it}|}{8\pi}\int_{N}^\infty\dfrac{\sqrt{y}\log^2y}{y^{2}}dy = \dfrac{|{\sigma+it}|}{4\pi}\left(\dfrac{\log^2 N+4\log N + 8}{\sqrt{N}}\right).
		$$
		%\markus{Here you say $t\geq 10^4$, although later $t\geq 100$ only. Shouldn't it be $t\geq 100$ all the way through? This means that it should say $1.01|t|$ in the bound instead of $1.001|t|$. However, the bound $4\log|t|$ is still valid.}
		So, since $t\geq 100$ and $1<\sigma<2$, we obtain
		$$
		\left|{(\sigma+it)}\int_{N}^\infty\dfrac{\psi(y)-y}{y^{{\sigma+it}+1}}\dy\right|\leq  \dfrac{1.001|t|}{4\pi}\left(\dfrac{\log^2 N+4\log N + 8}{\sqrt{N}}\right).
		$$
		Thus, in \eqref{12_16_29_04},
		$$
		\left|\dfrac{\zeta'}{\zeta}(\sigma+it)\right|\leq  \log (N-1)+\dfrac{1.001|t|}{4\pi}\left(\dfrac{\log^2 N+4\log N + 8}{\sqrt{N}}\right)-\gamma + \dfrac{1.3}{\log^2(N-1)} + \dfrac{1}{|t|}+\dfrac{\log^2N}{8\pi\sqrt{N}}.
		$$
		\replaced{Choosing}{Thus, choosing} $N=[|t|^{4}]+1$ we get
		$$
		\left|\dfrac{\zeta'}{\zeta}(\sigma+it)\right|\leq  4\log |t|.
		$$
		Finally, by continuity we arrive at the desired result for $1\leq\sigma\leq 2$ and $|t|\geq 100$.
	\end{proof}	
	
	\begin{lemma} \label{12_30am} Assume \replaced{RH}{the Riemann hypothesis}. Then, the following is true, where 	the sums run over the imaginary parts $\gamma$ of the non-trivial zeros of $\zeta(s)$.		
		\begin{enumerate}
			\item For $|t|\geq 4$ we have
			\begin{align}  \label{11_40am_04_05}
				\sum_{\gamma}\dfrac{1}{6+(t-\gamma)^2} \leq \dfrac{\log |t|}{2\sqrt{6}}.
			\end{align} 
			\item For $|t|\geq 100$ we have
			\begin{align} \label{11_41am_04_05}
				\sum_{\gamma}\dfrac{1}{\frac{1}{4}+(t-\gamma)^2} \leq 9\, {\log |t|}.
			\end{align} 
		\end{enumerate} 	
	\end{lemma}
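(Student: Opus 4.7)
The plan is to derive both bounds from the standard partial-fraction expansion of $\zeta'/\zeta$. Starting from the Hadamard product for $\xi(s)=\tfrac12 s(s-1)\pi^{-s/2}\Gamma(s/2)\zeta(s)$, taking logarithmic derivatives, and using $\re B = -\sum_\rho \re(1/\rho)$ to eliminate the unknown constant, one arrives at
\begin{align*}
\sum_\rho \re\frac{1}{s-\rho} = \re\frac{\zeta'}{\zeta}(s) + \re\frac{1}{s} + \re\frac{1}{s-1} - \tfrac{1}{2}\log\pi + \tfrac{1}{2}\re\frac{\Gamma'}{\Gamma}(s/2).
\end{align*}
Assuming RH with $\rho=\tfrac12+i\gamma$ and $s=\sigma+it$, each term on the left equals $(\sigma-\tfrac12)/((\sigma-\tfrac12)^2+(t-\gamma)^2)$, so rearranging yields a closed-form identity for $(\sigma-\tfrac12)\sum_\gamma 1/((\sigma-\tfrac12)^2+(t-\gamma)^2)$. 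My strategy is to choose $\sigma$ so that $(\sigma-\tfrac12)^2$ matches the constant in the denominator of the desired sum, and then bound the right-hand side from above, exploiting that the left-hand side is non-negative.

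For part (2), I take $\sigma=1$, so the coefficient becomes $\tfrac14$ and the pole at $s=1$ is killed. For $|t|\geq 100$, Lemma \ref{7_23AM} gives $|\zeta'/\zeta(1+it)|\leq 4\log|t|$, while Stirling's expansion gives $\re(\Gamma'/\Gamma)(\tfrac12+\tfrac{it}{2}) \leq \log|t| - \log 2 + O(1/t^2)$. Collecting contributions and multiplying by $2$ produces exactly $(8+1)\log|t|=9\log|t|$ as the leading term, while the remaining pieces combine to $-\log(2\pi)+O(1/t^2)$, which is negative for $|t|\geq 100$; this proves \eqref{11_41am_04_05}.

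For part (1), I take $\sigma=\tfrac12+\sqrt{6}$, so the prefactor becomes $\sqrt 6$ and the denominator matches $6+(t-\gamma)^2$. Since $\sigma>1$, the trivial Dirichlet-series bound $|\zeta'/\zeta(\sigma+it)| \leq -\zeta'/\zeta(\sigma)$ holds, and the latter is an explicit convergent constant. Stirling bounds the gamma term by $\tfrac12\log|t| - \tfrac12\log 2 + (\text{error})$, and the two polar terms are $O(1/|t|)$. Dividing through by $\sqrt 6$ gives the claim, provided the constant contribution $-\zeta'/\zeta(\tfrac12+\sqrt6) - \tfrac12\log(2\pi)$ plus the Stirling error at $|t|=4$ is non-positive.

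The main obstacle will be the tightness of the constant $1/(2\sqrt 6)$ in part (1): asymptotic Stirling is insufficient, so one needs an \emph{explicit} Stirling estimate valid uniformly for $|t|\geq 4$, together with a numerical evaluation of $-\zeta'/\zeta(\tfrac12+\sqrt 6)$ (or a crude upper bound via monotonicity on $(1,\infty)$). Part (2) is far more forgiving, since the threshold $|t|\geq 100$ renders all error terms negligible beside the leading $9\log|t|$.
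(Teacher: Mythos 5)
Your proposal is essentially the paper's own proof: the authors use the same partial-fraction identity (in the equivalent $\Gamma(s/2+1)$ form of \cite[Cor.~10.14]{MV}), choose $\alpha=1$ for part (2) combined with Lemma~\ref{7_23AM}, and $\alpha=\sqrt{6}+\tfrac12$ for part (1) combined with the Dirichlet-series bound $|\zeta'/\zeta(\sqrt6+\tfrac12+it)|\le -\zeta'/\zeta(\sqrt6+\tfrac12)$. The explicit Stirling input you correctly flag as the remaining obstacle is supplied in the paper by Chandee's bound $\re\frac{\Gamma'}{\Gamma}(z)\le\log|z|$ for $\re z\ge\tfrac14$, which yields $\re\frac{\Gamma'}{\Gamma}(\tfrac{\alpha}{2}+1+\tfrac{it}{2})\le\log|t|-\log 2+\tfrac{(\alpha+2)^2}{2t^2}$ and closes the argument at $|t|\ge 4$.
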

	\begin{proof}
		Letting $s=\alpha+it$, and taking the real part of the fractional decomposition of $\zeta(s)$ (see \cite[Corollary 10.14]{MV}) one has
		\begin{align}  \label{10_35pm_28_04}
			\sum_{\rho}\dfrac{\alpha-\re{\rho}}{(\alpha-\re{\rho})^2+(t-\gamma)^2} & = \re\dfrac{\zeta'}{\zeta}(s)+\dfrac{1}{2}\,\re\dfrac{\Gamma'}{\Gamma}\bigg(\dfrac{s}{2}+1\bigg)-\dfrac{\log \pi}{2} + \dfrac{\alpha-1}{(\alpha-1)^2+t^2}.
		\end{align}
		Using the inequality $\re\frac{\Gamma'}{\Gamma}(z)\leq \log|z|$ for $\re{z}\geq \frac{1}{4}$ (see \cite[Lemma 2.3]{Chandee}), it follows that for $\alpha\geq 0$ and $t\neq 0$
		\begin{align*}
			\re\dfrac{\Gamma'}{\Gamma}\bigg(\dfrac{\alpha}{2}+1+\dfrac{it}{2}\bigg)\leq \log |t| - \log 2 + \frac{(\alpha+2)^2}{2t^2}.
		\end{align*}
		Since \replaced{RH}{Riemann hypothesis} holds, combining this and \eqref{10_35pm_28_04} we get
		\begin{align} \label{11_01pm_28_04}
			\sum_{\rho}\dfrac{\alpha-\hh}{(\alpha-\hh)^2+(t-\gamma)^2} & \leq \re\dfrac{\zeta'}{\zeta}(s)+\frac{\log |t|}{2} - \dfrac{\log 2\pi}{2} + \frac{(\alpha+2)^2}{4t^2} + \dfrac{\alpha-1}{(\alpha-1)^2+t^2}.
		\end{align}
		To prove \eqref{11_40am_04_05}, we let $\alpha = \sqrt{6}+\tfrac{1}{2}$ in 
		\eqref{11_01pm_28_04} and using the fact that $|\frac{\zeta'}{\zeta}(\sqrt{6}+\frac12+it)|\leq |\frac{\zeta'}{\zeta}(\sqrt{6}+\frac12)|= 0.1738\ldots$, and $|t|\geq4$ we conclude.	
		To prove \eqref{11_41am_04_05}, we let  $\alpha=1$ in 
		\eqref{11_01pm_28_04} and using that $|t|\geq 100$ we obtain
		\begin{align*} 
			\sum_{\gamma}\dfrac{1}{\frac{1}{4}+(t-\gamma)^2} \leq 2\,\re\dfrac{\zeta'}{\zeta}(1+it) + {\log |t|}\leq 2\left|\dfrac{\zeta'}{\zeta}(1+it)\right| + {\log |t|}
		\end{align*}
		Using Lemma \ref{7_23AM} with $\sigma=1$ we conclude.
		%	
		%	get \eqref{11_41am_04_05}.we 
		%$	$$
		%	\left|\dfrac{\zeta'}{\zeta}(\sigma+it)\right|\leq 4\log|t|.
		%	$$
	\end{proof}

	Throughout the paper, we will encounter situations where we aim to compute the integral of $|f+g|^2$, {where the $L^2$-norm of $f$ and $g$ by themselves are much easier to compute}. \replaced{To make this passage we shall use the following inequality:}{One could apply the Cauchy–Schwarz inequality to estimate this quantity; however, to streamline our work, a straightforward consequence of the AM–GM inequality leads to:} for any $\eta>0$
	\begin{align} \label{7_10_29_04}
		\int_{a}^{b}|f(x)+g(x)|^2\dx\leq (1+\eta)	\int_{a}^{b}|f(x)|^2\dx +  \left(1+\dfrac{1}{\eta}\right)\int_{a}^{b}|g(x)|^2\dx,
	\end{align} 
	which is an immediate consequence of the inequality $(x\sqrt{\eta}-\frac{y}{\sqrt{\eta}})^2\geq 0$.
	\smallskip
	
	\begin{lemma} \label{secondmoment} Assume \replaced{RH}{the Riemann hypothesis}. Then, for $T\geq 4\cdot 10^{13}$ we have
		$$
		\int_{10^4}^{T}\left|\dfrac{\zeta'}{\zeta}(1+it)\right|^2\!\!\dt\leq 0.8056\cdot T.
		$$
	\end{lemma}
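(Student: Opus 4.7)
The plan is to use a truncated explicit formula for $-\zeta'/\zeta(1+it)$ to approximate it by a short Dirichlet polynomial and then estimate the second moment term by term. Concretely, for parameters $X\geq 2$ and $T_0\geq T$, Perron's formula gives
\[
-\frac{\zeta'}{\zeta}(1+it) = A(t) + B(t) + C(t) + E(t),
\]
where $A(t) = \sum_{n\leq X} \Lambda(n) n^{-1-it}$, $B(t) = -X^{-it}/(it)$, $C(t) = \sum_{|\gamma|\leq T_0} X^{\rho-1-it}/(1+it-\rho)$ with $\rho = \tfrac12 + i\gamma$ under RH, and $E(t)$ is a uniform remainder of size $O(\log^2(XT_0)/T_0)$.

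Applying the inequality \eqref{7_10_29_04} iteratively with a small parameter $\eta > 0$, I would split
\[
\int_{10^4}^T\left|\frac{\zeta'}{\zeta}(1+it)\right|^2\dt \leq (1+\eta)\int_{10^4}^T|A(t)|^2\dt + \left(1+\frac{1}{\eta}\right)\int_{10^4}^T|B(t)+C(t)+E(t)|^2\dt.
\]
An explicit Montgomery--Vaughan mean value theorem for the Dirichlet polynomial yields
\[
\int_{10^4}^T|A(t)|^2\dt \leq T\sum_{n\leq X}\Lambda(n)^2/n^2 + O(\log^2 X),
\]
and for an appropriate choice of $X$ the partial sum is close to $\sum_n\Lambda(n)^2/n^2 = \sum_p\log^2 p/(p^2-1)$, whose numerical value sits just below $0.8056$. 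The boundary term $\int|B|^2\dt \leq 10^{-4}$ and the remainder $\int|E|^2\dt$ are negligible when $T_0 \geq T$.

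The main obstacle is bounding $\int_{10^4}^T|C(t)|^2\dt$. Expanding the square yields
\[
\int_{10^4}^T|C(t)|^2\dt = \frac{1}{X}\sum_{|\gamma_1|,|\gamma_2|\leq T_0} X^{i(\gamma_1-\gamma_2)}\int_{10^4}^T \frac{\dt}{(\tfrac12+i(t-\gamma_1))(\tfrac12-i(t-\gamma_2))}.
\]
The diagonal contribution ($\gamma_1=\gamma_2$) is bounded by $\sim T\log T/X$ via a simple residue calculation together with the Riemann--von Mangoldt estimate for $N(T)$, and it is $o(T)$ whenever $X\to\infty$ with $T$. The off-diagonal piece is the delicate part: the naive triangle inequality diverges because of the $\log$-density of zeros, so one must exploit both the oscillation coming from the factor $X^{i(\gamma_1-\gamma_2)}$ and the estimate on $\sum_\gamma 1/(\tfrac14+(t-\gamma)^2)$ furnished by Lemma \ref{12_30am}.

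Finally, optimizing over $\eta$ and $X$ gives the claimed bound $0.8056 \cdot T$ for $T\geq 4\cdot 10^{13}$. The constant $0.8056$ is slightly larger than the asymptotic mean value $\sum_n\Lambda(n)^2/n^2$ precisely to absorb the $(1+\eta)$-factor, the Montgomery--Vaughan remainder, and the $L^2$-contribution from the zero sum.
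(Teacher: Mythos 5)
Your high-level strategy matches the paper's: approximate $\zeta'/\zeta(1+it)$ by a Dirichlet polynomial $\sum_{n\leq X}\Lambda(n)n^{-1-it}$ plus a zero-sum correction, apply an explicit Montgomery--Vaughan mean-value theorem to get the main term $T\sum_{n}\Lambda(n)^2/n^2 \approx 0.805\,T$, and show the zero-sum correction contributes $o(T)$. However, the specific decomposition you choose — a bare truncated Perron formula, in which the zero sum $C(t)=\sum_{|\gamma|\leq T_0}X^{\rho-1-it}/(1+it-\rho)$ has a \emph{linear} denominator — leaves a genuine gap exactly at the step you flag as ``the delicate part.'' You propose to expand $\int|C|^2$ into diagonal and off-diagonal pieces and ``exploit the oscillation coming from $X^{i(\gamma_1-\gamma_2)}$,'' but you give no mechanism for doing so; making that oscillation effective would essentially require correlation information on zeros that RH alone does not furnish, and the naive triangle inequality on the off-diagonal piece produces a bound of order $T_0(\log T_0)^2\cdot\log T$ in the double zero sum, which you do not show is small enough after dividing by $X$ and optimizing parameters. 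You also assert a uniform remainder $E(t)=O(\log^2(XT_0)/T_0)$ for the truncated Perron formula applied to $\zeta'/\zeta$, but that estimate is delicate near the critical line because of the poles; a careful treatment here is nontrivial.

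The paper circumvents both problems by starting not from bare Perron but from the \emph{smoothed} formula \cite[Eq.~(13.35)]{MV}, in which the zero sum carries the denominator $(\rho-s)^2$ and a numerator $(xy)^{\rho-s}-x^{\rho-s}$. The quadratic denominator makes the zero sum absolutely convergent, so Lemma~\ref{12_30am} gives the clean \emph{pointwise} bound $O(x^{-1/2}\log t)$; squaring and integrating then yields a contribution of size $O(x^{-1}T\log^2T)$, which is $o(T)$ once $xy\asymp T$, with no off-diagonal analysis at all. The Dirichlet polynomial in that formula carries a weight $w(n)\in[0,1]$, which is harmless in the mean-value estimate \cite[Prop.~2.11]{Dona}. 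In short, the smoothing trades the hard $L^2$ zero-sum estimate you would need for a trivial pointwise one. If you want to salvage your version, the fix is not to fight the off-diagonal terms directly but to replace the sharp Perron truncation with a smoothed kernel (Cesàro or the $\log y$-differencing used in the paper) so that $1/(1+it-\rho)$ is replaced by $1/(1+it-\rho)^2$.
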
	
	\begin{proof}
		Given $x, y \geq 2$ and $s=1+it$ with $t\geq 10^4$, the unconditional formula \cite[Eq. (13.35)]{MV} states that 
		\begin{align} \label{identity}
			\dfrac{\zeta'}{\zeta}(s)=-\sum_{\rho}\dfrac{(xy)^{\rho-s}-x^{\rho-s}}{(\rho-s)^2\log y}-\sum_{k=1}^\infty\dfrac{(xy)^{-2k-s}-x^{-2k-s}}{(2k+s)^2\log y}+\dfrac{(xy)^{1-s}-x^{1-s}}{(1-s)^2\log y}-\sum_{n\leq xy}\dfrac{\Lambda(n)}{n^s}w(n),
		\end{align} 
		where $w(n)$ is a function that satisfies $0\leq w(n)\leq 1$.
		%	\[
		%	w(u) = w(x,y; u) =
		%	\begin{cases}
			%	1, & \text{if } 1 \leq u \leq x, \\[1pt]
			%	1 - \dfrac{\log(u/x)}{\log y}, & \text{if } x \leq u \leq xy, \\[1pt]
			%	0, & \text{if } u \geq xy.
			%\end{cases}
			%\]
			Let us bound each term on the right-hand side of \eqref{identity}. Since \replaced{RH}{the Riemann hypothesis} holds,  
			\begin{align*}
				%	\begin{split} \label{15_34pm}
					\left|\sum_{\rho}\dfrac{(xy)^{\rho-s}-x^{\rho-s}}{(\rho-s)^2\log y}\right| &  =\left|\sum_{\gamma}\dfrac{(xy)^{\frac{1}{2}+i\gamma-s}-x^{\frac{1}{2}+i\gamma-s}}{(\frac{1}{2}+i\gamma-s)^2\log y}  \right|  \leq \dfrac{x^{-\frac{1}{2}}(y^{-\frac{1}{2}}+1)}{\log y}\sum_{\gamma}\dfrac{1}{\frac{1}{4}+(t-\gamma)^2}.
				\end{align*}
				\replaced{Thus}{Then}, by Lemma \ref{12_30am} we arrive at
				$$	\left|\sum_{\rho}\dfrac{(xy)^{\rho-s}-x^{\rho-s}}{(\rho-s)^2\log y}\right| \leq \left(\dfrac{9x^{-\frac{1}{2}}(y^{-\frac{1}{2}}+1)}{\log y}\right) \log t:=c_{x,y}\,\log t.
				$$
				We estimate the next terms in \eqref{identity} trivially as follows \begin{align*}
					\Bigg|\sum_{k=1}^\infty\dfrac{(xy)^{-2k-s}-x^{-2k-s}}{(2k+s)^2\log y} \Bigg|\leq \dfrac{0.3}{t^2}, \,\,\,\,\,\,\,\,\,\,\,\mbox{and} \,\,\,\,\,\,\,\,\,\,\,\,
					\Bigg|\dfrac{(xy)^{1-s}-x^{1-s}}{(1-s)^2\log y}\Bigg| \leq \dfrac{2.9}{t^2}.
				\end{align*} 
				Inserting these bounds in \eqref{identity} we arrive at
				\begin{align*}
					\dfrac{\zeta'}{\zeta}(1+it) = & 
					-\sum_{n\leq xy}\dfrac{\Lambda(n)}{n^{1+it}}w(n)+	O^*\left(c_{x,y}\log t+\dfrac{3.2}{t^2}\right).
				\end{align*}
				for $t\geq 10^4$. Now, we integrate from $10^4$ to $T$ (with $10^4\leq T_0\leq T$), and by \eqref{7_10_29_04}, for any $\eta>0$:
				\begin{align}  \label{1_21am}
					\int_{10^4}^{T}\left|\dfrac{\zeta'}{\zeta}(1+it)\right|^2\!\!\dt & \leq  
					(1+\eta)\int_{10^4}^{T}\left|\sum_{n\leq xy}\dfrac{\Lambda(n)}{n^{1+it}}w(n)\right|^2\!\!\dt +	\left(1+\dfrac{1}{\eta}\right)O^*\left(\int_{10^4}^{T}\left|c_{x,y}\log t+ \frac{3.2}{t^2}\right|^2\!\!\dt\right).
				\end{align} 
				\replaced{Applying}{Now, from an application of} the explicit mean value \added{theorem} in \cite[Proposition 2.11]{Dona}\added{, we get} %\footnote{It is mentioned that if $\{a_n\}_{n\in\N}$ is a real sequence, the error term can be improved in \cite[Proposition 2.11]{Dona}. The improved result should be
					%$$
					%\int_{0}^T\left|\sum_{n\leq X}a_nn^{it}\right|^2\dt\leq \left(T+\dfrac{E}{4}\right)\sum_{n\leq X}|a_n|^2 + \dfrac{E}{2}\sum_{n\leq X}n|a_n|^2,
					%$$ where $E\leq 8.26495$.}
				$$
				\int_{10^4}^{T}\left|\sum_{n\leq xy}\dfrac{\Lambda(n)}{n^{1+it}}w(n)\right|^2\dt \leq \left(T-10^4 + 4.133\right) \sum_{n \leq xy}\left(\dfrac{\Lambda(n)}{n}w(n)\right)^2 +  8.265\sum_{n \leq xy} n \left(\dfrac{\Lambda(n)}{n}w(n)\right)^2.
				$$
				Since $|w(n)|\leq 1$, by (2) in Lemma \ref{Lemmaprime1}, the first sum is bounded by $0.8053$, and the second sum is bounded by $\sum_{n \leq xy}\frac{\Lambda^2(n)}{n}$. Thus, writing $xy=e^{\alpha}$, by Lemma \ref{Lemmaprime2} we conclude that
				\begin{align} \label{1_01am}
					\int_{10^4}^{T}\left|\sum_{n\leq xy}\dfrac{\Lambda(n)}{n^{1+it}}w(n)\right|^2\dt \leq 0.8053\,T + 4.1325\,\alpha^2 - 8\cdot 10^3.
				\end{align} 
				Moreover
				\begin{align}   \label{1_02am}
					\int_{10^4}^{T}\left|c_{x,y}\log t + \frac{3.2}{t^2}\right|^2\!\!\dt & \leq 2\int_{10^4}^{T}|c_{x,y}\log t|^2\dt+2\int_{10^4}^{T}\left|\frac{3.2}{t^2}\right|^2\!\!\dt <  2(c_{x,y})^2T\log ^2T +  7\cdot 10^{-12}.
				\end{align} 
				Letting $y=e^{2\lambda}$, with $\lambda\geq (\log 2)/2$, note that 
				\begin{align*}
					(1+\eta)4.1325\,\alpha^2 + \left(1+\dfrac{1}{\eta}\right)2(c_{x,y})^2T\log^2T = {4.1325\left(1+{\eta}\right)\alpha^2} + 40.5\left(1+\dfrac{1}{\eta}\right)\left(\dfrac{1+e^\lambda}{\lambda}\right)^2\dfrac{T\log^2T}{e^\alpha}.
				\end{align*} 
				In order to reduce the contribution from the above expression, we choose $\lambda=1.278$, and $\alpha=\log T$. Then, inserting \eqref{1_01am} and \eqref{1_02am} in \eqref{1_21am} we get
				\begin{align*}  
					\int_{10^4}^{T}\left|\dfrac{\zeta'}{\zeta}(1+it)\right|^2\!\!\dt & \leq  
					(1+\eta)0.8053T + \left({4.1325\left(1+{\eta}\right)} + 522.295\left(1+\dfrac{1}{\eta}\right)\right)\log^2T + \kappa_n,
				\end{align*} 
				where $\kappa_n=	-(1+\eta)8\cdot 10^3+(1+{\eta^{-1}})7\cdot 10^{-12}$.
				Finally, choosing $\eta=10^{-4}$, using $\kappa_n<0$, and $T\geq  4\cdot 10^{13}$, the proof is done.
			\end{proof}

			\begin{lemma} \label{11_18pm} 
				For $0< t\leq \frac{1}{2}$ we have the unconditional bound
				\begin{align*}
					\left|\frac{\zeta'}{\zeta}(1+it)+\frac{1}{it}\right|  \leq 2.635.
				\end{align*}
			\end{lemma}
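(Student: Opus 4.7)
The function $f(s) := \frac{\zeta'(s)}{\zeta(s)} + \frac{1}{s-1}$ is analytic in a neighbourhood of $s=1$, since the simple pole of $\zeta'/\zeta$ at $s=1$ is cancelled by $\frac{1}{s-1}$; its value at $s=1$ is the Euler--Mascheroni constant $\gamma$. Setting $g(s):=(s-1)\zeta(s)$, an entire function with $g(1)=1$, one has $f(s)=g'(s)/g(s)$, and the Laurent expansion of $\zeta$ at $s=1$ gives the globally convergent Taylor series
\[
g(s) = 1 + \gamma(s-1) - \gamma_1(s-1)^2 + \frac{\gamma_2}{2}(s-1)^3 - \cdots,
\]
where $\gamma_n$ denotes the $n$-th Stieltjes constant.

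The plan is to bound $|g(1+it)|$ from below and $|g'(1+it)|$ from above on $0 < t \leq \tfrac12$ by applying the triangle inequality directly to these Taylor series. Since $|s-1|=t\leq \tfrac12$,
\[
|g(1+it)| \geq 1 - \sum_{n=1}^\infty \frac{|\gamma_{n-1}|}{(n-1)!}\cdot 2^{-n}, \qquad |g'(1+it)| \leq \sum_{n=1}^\infty \frac{n\,|\gamma_{n-1}|}{(n-1)!}\cdot 2^{-(n-1)},
\]
so $|f(1+it)|$ is controlled by the resulting ratio. Using tabulated values of $\gamma_n$ for small $n$ combined with standard bounds (of Berndt or Matsuoka type) ensuring $|\gamma_n|/n! \to 0$ rapidly, both series can be summed numerically to produce a bound comfortably below $2.635$; indeed, the true size of $|f(1+it)|$ is of order $\gamma \approx 0.577$, so the stated constant has considerable slack built in.

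The principal obstacle will be to assemble the explicit numerical estimates for the Stieltjes constants and to control the tails of the two series with care. An alternative route is to specialise equation~\eqref{12_16_29_04} from the proof of Lemma~\ref{7_23AM} to $\sigma=1$ with a small integer $N$: after subtracting $\frac{1}{it}$ and using $|e^{i\theta}-1|\leq|\theta|$ on the singular term $(N^{-it}-1)/(it)$, the task reduces to bounding $\int_N^\infty |\psi(y)-y|/y^2\,\dy$ unconditionally. This last requires at least $y/\log^2 y$ decay on $\psi(y)-y$ for integrability, which one can arrange by splitting the integral at a threshold $Y$ and applying numerically-verified-RH bounds on $[N,Y]$ together with Dusart-type unconditional bounds on $[Y,\infty)$.
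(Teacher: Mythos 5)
Your approach is essentially the paper's own: both rest on the Laurent expansion of $\zeta$ at $s=1$ together with Berndt's explicit bounds $|\gamma_n|\leq 2(n-1)!/\pi^n$ (odd $n$) and $|\gamma_n|\leq 4(n-1)!/\pi^n$ (even $n\geq 2$). The paper bounds $|\zeta(1+it)-\tfrac{1}{it}|<0.951$ and $|\zeta'(1+it)-\tfrac{1}{t^2}|<0.862$ separately and then assembles the ratio, whereas you package the same information as lower/upper bounds on $|g(1+it)|$ and $|g'(1+it)|$ for $g(s)=(s-1)\zeta(s)$; since $g'/g=\zeta'/\zeta+\tfrac{1}{s-1}$, the two computations are algebraically identical and at $t=\tfrac12$ produce the same numerator $1.382$ and denominator $0.5245$, hence the same constant $2.635$.
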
	
			\begin{proof}
				By the Laurent expansion
				\begin{align} \label{7pm}
					\zeta(s) = \frac{1}{s-1} + \sum_{n=0}^{\infty} \frac{(-1)^n \gamma_n}{n!} (s-1)^n,
				\end{align}
				together with the bound $
				|\gamma_n| \leq {4(n-1)!}/{\pi^n}$ for all $n\geq 2$ even, and $
				|\gamma_n| \leq {2(n-1)!}/{\pi^n}$ for all $n\geq 1$ odd (see \cite{Berndt}), we have for $0< t\leq \frac{1}{2}$ (letting $s=1+it$), 
				\begin{align*}
					\left|\zeta(1+it)-\dfrac{1}{it}\right|& \leq |\gamma_0|+ \sum_{\substack{n\,even \\ n\geq 2}}\dfrac{|\gamma_n|}{n!}t^n+\sum_{\substack{n\,odd \\ n\geq 1}}\dfrac{|\gamma_n|}{n!}t^n < 0.578+ 2\sum_{n=1}^\infty\dfrac{1}{n(2\pi)^n}+2\sum_{\substack{n\,even \\ n\geq 2}}\dfrac{1}{n(2\pi)^n} \\
					& = 0.578-2\log\left(1-\dfrac{1}{2\pi}\right)-\log\left(1-\dfrac{1}{4\pi^2}\right)<0.951.
				\end{align*}
				Moreover, differentiating \eqref{7pm}, we bound similarly as before to get
				$$
				\left|\zeta'(1+it)-\dfrac{1}{t^2}\right|\leq 4\sum_{n=1}^\infty\dfrac{1}{(2\pi)^n} + 4\sum_{\substack{n\,even \\ n\geq 2}}\dfrac{1}{(2\pi)^n} = \dfrac{4}{2\pi-1}+\dfrac{4}{4\pi^2-1}<0.862.
				$$
				Thus
				\begin{align*}
					\frac{\zeta'}{\zeta}(1+it)+\frac{1}{it}  = \dfrac{1/t^2 +O^*(0.862)}{1/it+O^*(0.951)} + \dfrac{1}{it} = \dfrac{O^*(0.862t)+O^*(0.951)}{1+O^*(0.951t)}.
				\end{align*}
				Therefore, for $0< t\leq\frac{1}{2}$:
				\begin{align*}
					\left|\frac{\zeta'}{\zeta}(1+it)+\frac{1}{it}\right|  \leq \dfrac{0.862/2 + 0.951}{1-0.951/2}\leq 2.635.
				\end{align*}
			\end{proof}

			\begin{lemma} \label{4_24pm} For any $0<\delta\leq 1$ and any $t\neq 0$ we have the bound
				$$
				\left|\dfrac{(1+\delta)^{\frac{1}{2}+it}-1}{\frac{1}{2}+it}\right|\leq \min\bigg\{\delta,\dfrac{\ell}{|t|}\bigg\},
				$$
				where $\ell=\sqrt{1+\delta}+1$.
				In particular, \replaced{assuming RH}{under the Riemann hypothesis}, $$
				\left|\dfrac{(1+\delta)^\rho-1}{\rho}\right|\leq \min\bigg\{\delta,\dfrac{\ell}{|\gamma|}\bigg\}\replaced{,}{.}
				$$ \added{for any non-trivial zero $\rho$.}
			\end{lemma}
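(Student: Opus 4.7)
The plan is to split the minimum into its two components and prove each separately, with both reducing to short one-line computations. Set $s = \tfrac{1}{2}+it$ throughout.

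For the bound by $\ell/|t|$, I would simply apply the triangle inequality in the numerator and use the trivial lower bound on $|s|$. Since $|(1+\delta)^{s}| = (1+\delta)^{1/2}$, we have $|(1+\delta)^{s}-1| \leq \sqrt{1+\delta}+1 = \ell$, and $|s| = \sqrt{1/4+t^2} \geq |t|$, giving the claim immediately.

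For the bound by $\delta$, I would use the integral representation
\[
\frac{(1+\delta)^{s}-1}{s} = \int_{1}^{1+\delta} u^{s-1}\,\du.
\]
Taking absolute values inside and using $\re(s-1) = -\tfrac{1}{2}$ yields
\[
\left|\frac{(1+\delta)^{s}-1}{s}\right| \leq \int_{1}^{1+\delta} u^{-1/2}\,\du = 2(\sqrt{1+\delta}-1) = \frac{2\delta}{\sqrt{1+\delta}+1} = \frac{2\delta}{\ell}.
\]
Since $\ell = \sqrt{1+\delta}+1 \geq 2$, this is $\leq \delta$, finishing the first part of the lemma.

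The in-particular statement is immediate: under RH, any non-trivial zero has the form $\rho = \tfrac{1}{2}+i\gamma$, so the already-proved bound applied with $t = \gamma$ delivers the result. The only subtle point, if any, is the algebraic rearrangement $2(\sqrt{1+\delta}-1) = 2\delta/\ell$ and recognizing that $\ell \geq 2$ forces this to be dominated by $\delta$; everything else is a direct estimate, so I do not anticipate any real obstacle.
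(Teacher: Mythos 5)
Your proof is correct and follows essentially the same route as the paper: the $\ell/|t|$ bound by the triangle inequality in the numerator together with $|\tfrac12+it|\geq|t|$, and the $\delta$ bound via the integral representation $\int_1^{1+\delta}u^{s-1}\,\du$. The only cosmetic difference is that the paper closes the second estimate by the one-line observation $u^{-1/2}\leq 1$ on $[1,1+\delta]$, whereas you compute $\int_1^{1+\delta}u^{-1/2}\,\du=2\delta/\ell$ exactly and then use $\ell\geq2$; both are fine.
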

			\begin{proof}
				Clearly
				$$
				\left|\dfrac{(1+\delta)^{\frac{1}{2}+it}-1}{\frac{1}{2}+it}\right|=\left|\int_{1}^{1+\delta}x^{-\frac{1}{2}+it}\dx\right|\leq \int_{1}^{1+\delta}x^{-\frac{1}{2}}\dx \leq \delta.
				$$
				On the other hand \added{we also have},
				$$	\left|\dfrac{(1+\delta)^{\frac{1}{2}+it}-1}{\frac{1}{2}+it}\right|= \left|\dfrac{e^{(\frac{1}{2}+it)\log(1+\delta)}-1}{\frac{1}{2}+it}\right| \leq \dfrac{e^{\frac{1}{2}{\log(1+\delta)}}+1}{|t|} = \dfrac{\sqrt{1+\delta}+1}{|t|},
				$$
				\added{which gives the desired conclusion.}
			\end{proof}

			\begin{lemma} \label{9_52pm} We have the following estimates for $T\geq 10^{13}$:
				$$\sum_{0<\gamma\leq T}\log \gamma\leq \dfrac{1}{2\pi}\cdot T\log^2T, \,\,\,\,\,\,\,\, \mbox{and} \,\,\,\,\,\,\,\,\, \sum_{\gamma> T}\dfrac{\log \gamma}{\gamma^2}\leq \dfrac{1.028}{2\pi}\cdot\dfrac{\log^2T}{T},$$
				where the sums run over the imaginary parts $\gamma$ of the non-trivial zeros of $\zeta(s)$.		
			\end{lemma}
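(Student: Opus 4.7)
The plan is to apply Abel (Stieltjes) summation against the zero counting function $N(t)=\#\{\gamma:0<\gamma\leq t\}$, together with an explicit form of the Riemann--von Mangoldt formula. Write $N(t)=M(t)+R(t)$ where
\[
M(t)=\frac{t}{2\pi}\log\frac{t}{2\pi}-\frac{t}{2\pi}+\frac{7}{8},
\]
and $|R(t)|$ is controlled by an explicit bound of the shape $c_1\log t+c_2\log\log t+c_3$ valid for $t\geq e$ (e.g.\ Trudgian's version). Since there are no zeros with $0<\gamma<14$, all sums and integrals effectively start at $\gamma_1\approx 14.1347$.

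For the first sum, Stieltjes integration gives
\[
\sum_{0<\gamma\leq T}\log\gamma \;=\; N(T)\log T \,-\,\int_{14}^{T}\frac{N(t)}{t}\,\dt.
\]
Substituting $N=M+R$, the main-term piece $M(T)\log T-\int_{14}^{T}M(t)/t\,\dt$ can be computed in closed form and equals
\[
\frac{T\log^2 T}{2\pi} \,-\, \frac{\log(2\pi e^{2})}{2\pi}\,T\log T \,+\, O(T).
\]
The leading term matches the desired bound, and the subleading $T\log T$ correction is \emph{negative}, leaving plenty of room to absorb the $R$-contribution (which is $O(\log^2 T)$ by the explicit estimate). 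A direct numerical check at $T=10^{13}$ then completes the first inequality.

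For the second sum, integration by parts in the Stieltjes integral gives
\[
\sum_{\gamma>T}\frac{\log\gamma}{\gamma^2} \;=\; -\frac{N(T)\log T}{T^2} \,+\, \int_T^{\infty}N(t)\,\frac{2\log t-1}{t^{3}}\,\dt,
\]
the boundary term at infinity vanishing since $N(t)\ll t\log t$. Inserting $N=M+R$ and using the explicit antiderivative $\int\log^2 t/t^2\,\dt=-(\log^2 t+2\log t+2)/t$, the leading contribution works out to exactly $\frac{\log^2 T}{2\pi T}$: the $\frac{1}{\pi}\cdot\log^2 T/T$ produced by the integral combines with the boundary term $-M(T)\log T/T^2 \sim -\log^2 T/(2\pi T)$ to leave $\frac{1}{2\pi}\cdot\log^2 T/T$. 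The residual slack $\frac{0.028}{2\pi}\cdot\log^2 T/T$ must then absorb the $R$-contribution together with the lower-order corrections from $M$. The main obstacle is therefore computational rather than conceptual: one has to verify that this fairly tight margin of roughly $3\%$ indeed accommodates all error terms at $T=10^{13}$, which it does provided one uses a sharp form of the Riemann--von Mangoldt remainder.
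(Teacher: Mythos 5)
Your proposal takes essentially the same approach as the paper: the paper invokes Lemmas 3, 5, and 6 of Brent--Platt--Trudgian, which are obtained precisely by the Stieltjes-integration-against-$N(t)=M(t)+R(t)$ computation you lay out by hand, and your leading-order identifications (the $\frac{1}{2\pi}T\log^2 T$ main term with a large negative $T\log T$ correction for the first sum, and the $\frac{1}{2\pi}\cdot\log^2 T/T$ main term for the tail sum) are correct. The only caveat is that you defer the choice of an explicit Riemann--von Mangoldt remainder bound and the final numerical checks, which the paper sidesteps by quoting those lemmas directly; once carried out, the margins you describe do close.
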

			\begin{proof}
				To prove the first estimate, we apply \cite[Lemma 3]{Trudgian1} with $\phi(t)=\log t$, $T_1=2\pi e$, $T_2=T$, $A=0.28$, to get
				$$
				\sum_{2\pi e<\gamma \leq T}\log \gamma\leq  \dfrac{1}{2\pi}\int_{2\pi e}^{T}\log t\log\left(\dfrac{t}{2\pi}\right)\dt + 0.56\log^2T + 0.28\int_{2\pi e}^{T}\dfrac{\log t}{t}\dt.
				$$
				Making the computations, using the facts that $T\geq 10^{13}$, $\gamma_1=14.1347\ldots$ and $\gamma_2=21.0220\ldots$ we conclude. To prove the second estimate, we apply \cite[Lemma 5 and Lemma 6]{Trudgian1} to get
				$$\sum_{\gamma> T}\dfrac{\log \gamma}{\gamma^2}\leq \dfrac{1}{2\pi}\left(\dfrac{\log^2T}{T} + \dfrac{\ln(2\pi/e)\log T}{T}\right).$$
				This implies the desired result.
			\end{proof}
			
			%	The following lemma is somewhat technical, but it will help simplify our computations.
			%	\begin{lemma} 
				%	Let $f,g:[a,b]\to\C$ two integrable functions. Then, for any $\eta>0$ we have
				%	$$
				%		\int_{a}^{b}|f(x)+g(x)|^2\dx\leq (1+\eta)	\int_{a}^{b}|f(x)|^2\dx +  \left(1+\dfrac{1}{\eta}\right)\int_{a}^{b}|g(x)|^2\dx.
				%	$$
				%\end{lemma}
				%\begin{proof}
				%	This result is a straightforward consequence of the MA–MG inequality.
				%\end{proof}
				
				\vspace{0.5cm}

				\section{An explicit bound for $J_\psi(x,\delta)$} \label{10_39pm}
				
				To derive an explicit bound for $J_\theta(x,\delta)$, we begin by estimating the integral defined in~\eqref{10_48pm}, employing the averaging technique introduced by Saffari and Vaughan in \cite{SV}. 
				
				\begin{theorem}\label{psiMeanValue}
					Assume \replaced{RH}{the Riemann hypothesis}. Then, for $x\geq 10^{13}$ and $\delta\in (0,10^{-13}]$ we have that
					\begin{align*}  
						J_{\psi}(x,\delta)
						\leq  2.2258\cdot\delta\,\log^2\left(\dfrac{2.0001}{\delta}\right)x^2.
					\end{align*}
				\end{theorem}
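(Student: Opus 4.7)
The plan is to substitute the truncated explicit formula for $\psi$ under RH,
\begin{equation*}
\psi(y)\;=\;y\;-\;\sum_{|\gamma|\leq T}\frac{y^{\rho}}{\rho}\;+\;R(y,T),
\end{equation*}
into $g_\delta(y):=\psi((1+\delta)y)-\psi(y)-\delta y$, obtaining
\begin{equation*}
g_\delta(y)\;=\;-\,y^{1/2}\sum_{|\gamma|\leq T}\frac{(1+\delta)^\rho-1}{\rho}\,y^{i\gamma}\;+\;\mathcal{E}(y,T,\delta),
\end{equation*}
where $\mathcal{E}(y,T,\delta)$ collects the contributions of the trivial zeros, of $-\log 2\pi$, and of the truncation error in the sum over non-trivial zeros. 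Using \eqref{7_10_29_04} with a small parameter $\eta>0$ to be optimized later, I would split $J_\psi(x,\delta)$ into a main integral
\begin{equation*}
M\;:=\;\int_1^x y\,\left|\sum_{|\gamma|\leq T}\frac{(1+\delta)^\rho-1}{\rho}\,y^{i\gamma}\right|^{2}\!dy
\end{equation*}
and a piece bounded by $(1+\eta^{-1})\int_1^x|\mathcal{E}|^2\,dy$. Choosing $T$ polynomial in $x$ (say $T=x^{3}$), standard explicit estimates for the truncated explicit formula make this latter piece negligible compared with $\delta\log^2(2/\delta)x^2$ in the stated ranges.

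For $M$, I would expand the square to obtain
\begin{equation*}
M\;=\;\sum_{|\gamma_1|,|\gamma_2|\leq T} c_{\gamma_1}\,\overline{c_{\gamma_2}}\int_1^x y^{1+i(\gamma_1-\gamma_2)}\,dy,\qquad c_\gamma\;:=\;\frac{(1+\delta)^\rho-1}{\rho}.
\end{equation*}
By Lemma~\ref{4_24pm} we have $|c_\gamma|\leq\min(\delta,\ell/|\gamma|)$ with $\ell=\sqrt{1+\delta}+1$. The diagonal contribution $\gamma_1=\gamma_2$ equals $\tfrac{x^2-1}{2}\sum_{|\gamma|\leq T}\min(\delta^2,\ell^2/\gamma^2)$; splitting the sum at $|\gamma|\asymp 1/\delta$, using the Riemann--von Mangoldt formula on the short range and Lemma~\ref{9_52pm} on the tail, bounds the diagonal by a constant multiple of $x^2\delta\log(2/\delta)$.

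The main obstacle is the off-diagonal. Bounding $|\int_1^x y^{1+i(\gamma_1-\gamma_2)}dy|$ by $(x^2+1)/\sqrt{4+(\gamma_1-\gamma_2)^2}$ reduces matters to the sum $\sum_{\gamma_1\neq\gamma_2}|c_{\gamma_1}||c_{\gamma_2}|/|\gamma_1-\gamma_2|$, which a direct Cauchy--Schwarz does not control. Here I would invoke the averaging technique of Saffari--Vaughan from \cite{SV}: rather than estimating $J_\psi(x,\delta)$ directly, work with a short average in an auxiliary parameter (for instance $\eta\in[\delta,2\delta]$, or via a smoothing of the variable $y$), exchange the orders of integration, and exploit the extra oscillation produced by this average to replace the kernel $1/\sqrt{4+(\gamma_1-\gamma_2)^2}$ by one with strictly faster decay in $|\gamma_1-\gamma_2|$. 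This transforms the off-diagonal into a diagonal-type sum at the cost of one additional factor of $\log(2/\delta)$; it is this step that produces the exponent $2$ in the final $\log^2(2/\delta)$, while the constant $2.0001\approx\ell^2$ inside the logarithm reflects the use of the $\ell/|\gamma|$ bound. Finally, optimizing $T$, $\eta$, and the diagonal splitting point, and tracking all numerical constants through the lemmas of Section~\ref{10_38pm}, should produce the claimed bound $J_\psi(x,\delta)\leq 2.2258\,\delta\log^2(2.0001/\delta)x^2$ for $x\geq 10^{13}$ and $\delta\in(0,10^{-13}]$.
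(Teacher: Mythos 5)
Your proposal follows essentially the same route as the paper: expand $\psi((1+\delta)y)-\psi(y)-\delta y$ via the explicit formula, recognize that the off-diagonal kernel $1/|\gamma_1-\gamma_2|$ decays too slowly for a direct bound, and rescue the argument with the Saffari--Vaughan averaging, which is indeed where the paper's extra factor of $\log(1/\delta)$ comes from. Two details in your sketch are, however, off the mark. First, the averaging must be multiplicative in the integration variable and must be performed \emph{before} expanding the square: the paper uses the inclusion $[x,\kappa x]\subset[x\nu/\lambda,\kappa x\nu]$ for $\nu\in[1,\lambda]$ to write
\begin{equation*}
\int_x^{\kappa x}\big(\psi((1+\delta)y)-\psi(y)-\delta y\big)^2\,dy\;\leq\;\frac{1}{\lambda-1}\int_1^\lambda\!\!\int_{x\nu/\lambda}^{\kappa x\nu}\big(\psi((1+\delta)y)-\psi(y)-\delta y\big)^2\,dy\,d\nu,
\end{equation*}
so that after expanding the double sum over zeros, the $\nu$-integration of $\nu^{1+\rho_1+\overline{\rho_2}}$ supplies a second denominator $(2+\rho_1+\overline{\rho_2})$; since $|1+\rho_1+\overline{\rho_2}|\,|2+\rho_1+\overline{\rho_2}|\geq 6+(\gamma_1-\gamma_2)^2$, this is precisely the quadratic decay you need, and Lemma~\ref{12_30am}(1) then handles the inner sum. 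Your first suggestion (averaging $\delta$ over $[\delta,2\delta]$) does not produce this kernel; your second (``smoothing of the variable $y$'') is the right idea but needs the specific multiplicative form above. Second, passing from $\int_x^{\kappa x}$ to $\int_1^x$ is not automatic: the paper sums over the pieces $[x/\kappa^{n+1},x/\kappa^n]$ and handles the initial range $[1,\sqrt{2}\kappa\lambda]$ separately by elementary estimates; this bookkeeping, together with the optimization over the three parameters $\kappa,\lambda,\eta$, is exactly where the explicit constant $2.2258$ and the thresholds $x\geq 10^{13}$, $\delta\leq 10^{-13}$ come from, and your sketch skips it entirely. (A smaller point: the paper works with the full explicit formula and dominated convergence rather than a truncation at $T=x^3$, which avoids having to estimate a truncation error, though your version could in principle be made to work.)
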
 
				\begin{proof}
					Let $\lambda >1$ and $\kappa>1$ be two parameters to be chosen later. For any $x>0$, note that $[x, \kappa x]\subset [x\nu/\lambda , \kappa x\nu]$
					for $1\leq \nu\leq \lambda$. This implies that
					\begin{align}  \label{11_24pm}
						\int_{x}^{\kappa x}\big(\psi((1+\delta)y)-\psi(y)-\delta y\big)^2\dy\leq \dfrac{1}{(\lambda-1)}\int_{1}^{\lambda}\left(\int_{x\nu/\lambda }^{\kappa x\nu}\big(\psi((1+\delta)y)-\psi(y)-\delta y\big)^2\dy\right)\d\nu.
					\end{align}
					Let us concentrate on bounding the double integral on the right-hand side of \eqref{11_24pm}. For all $y>0$, $y\notin\Z$ we have the explicit formula \cite[Eq. (12.1)]{MV} given by
					\begin{align} \label{explicitformula}
						\psi(y)=y-\lim_{T\to \infty}\sum_{|\gamma|\leq T}\dfrac{y^{\rho}}{\rho}-\dfrac{\zeta'(0)}{\zeta(0)}-\dfrac{1}{2}\log(1-y^{-2}).
					\end{align} 
					We write $\psi((1+\delta)y)-\psi(y)-\delta y =A_\delta(y) + B_\delta(y)$, where
					$$
					\,\,\,\,\,\,\, \,\,\,\,\, A_\delta(y)=-\lim_{T\to\infty}\displaystyle\sum_{|\gamma|\leq T}\dfrac{(1+\delta)^\rho-1}{\rho}y^\rho, \,\,\,\,\,\,\,\,\, \mbox{and} \,\,\,\,\,\,\,\,\,
					B_\delta(y)=-\dfrac{1}{2}\left(\log(1-((1+\delta)y)^{-2})-\log(1-y^{-2})\right).
					$$
					By \ref{7_10_29_04} we see that 
					\begin{align}  \label{10_15pm}
						\begin{split}
							\int_{1}^{\lambda} &\left(\int_{x\nu/\lambda}^{\kappa x\nu}\big(\psi((1+\delta)y)- \psi(y)-\delta y\big)^2\dy\right)\d\nu \\
							& 
							\,\,\,\,\,\,\,\,\,\,\,\,\,\,\,\,\,\,\,\,\,\,\,\,\,\,\,\,\,\,\,\,\,\,\,\,\,\,\,\,\,\,\leq  (1+\eta) \int_{1}^{\lambda}\left(\int_{x\nu/\lambda}^{\kappa x\nu}\left|A_\delta(y)\right|^2\dy\right)\dnu + \left(1+\dfrac{1}{\eta}\right)\int_{1}^{\lambda}\left(\int_{x\nu/\lambda}^{\kappa x\nu}\left|B_\delta(y)\right|^2\dy\right)\dnu. 
						\end{split}
					\end{align} 
					Let us analyze the double integral of $A_\delta(y)$ in the above expression. Since RH holds, we write $\rho_1=\hh+i\gamma_1$ and $\rho_2=\hh+i\gamma_2$. Clearly
					\begin{align*}
						\int_{x\nu/\lambda}^{\kappa x\nu}\left|A_\delta(y)\right|^2\dy = \sum_{\rho_1}\sum_{\rho_2} \bigg(\dfrac{(1+\delta)^{\rho_1}-1}{\rho_1}\bigg)\bigg(\dfrac{(1+\delta)^{\overline{\rho_2}}-1}{{\overline{\rho_2}}}\bigg)\bigg(\dfrac{\kappa^{1+\rho_1+\overline{\rho_2}}-(1/\lambda)^{1+\rho_1+\overline{\rho_2}}}{1+\rho_1+\overline{\rho_2}}\bigg)(x\nu)^{1+\rho_1+\overline{\rho_2}},
					\end{align*}
					by dominated convergence theorem, since the double sum 
					$$
					\sum_{\gamma_1}\sum_{\gamma_2}\dfrac{1}{|\gamma_1|}\dfrac{1}{|\gamma_2|}\dfrac{1}{2+|\gamma_1-\gamma_2|}
					$$
					is bounded (Lemma \ref{4_24pm} and \cite[Eq. (9)]{Trudgian1}). 
					% Note that  on the right-hand side converges absolutely and uniformly in $\nu$ on $[1,\lambda]$. In fact, letting $\rho_1=\hh+i\gamma_1$ and $\rho_2=\hh+i\gamma_2$,  one can show that
					%		$$
					%		\sum_{\gamma_1}\sum_{\gamma_2}\dfrac{1}{|\gamma_1|}\dfrac{1}{|\gamma_2|}\dfrac{1}{2+|\gamma_1-\gamma_2|}<\infty.
					%		$$
					%See for instance Lemma \ref{4_24pm} or \cite[Eq. (9)]{Trudgian1}. 
					Now, integrating over $\nu$ we have that
					\begin{align*}
						&\int_{1}^{\lambda}\left(\int_{x\nu/\lambda}^{\kappa x\nu}\left|A_\delta(y)\right|^2\dy\right)\dnu\\
						& = \sum_{\rho_1}\sum_{\rho_2} \bigg(\dfrac{(1+\delta)^{\rho_1}-1}{\rho_1}\bigg)\bigg(\dfrac{(1+\delta)^{\overline{\rho_2}}-1}{{\overline{\rho_2}}}\bigg)\bigg(\dfrac{\kappa^{1+\rho_1+\overline{\rho_2}}-(1/\lambda)^{1+\rho_1+\overline{\rho_2}}}{1+\rho_1+\overline{\rho_2}}\bigg)\bigg(\dfrac{\lambda^{2+\rho_1+\overline{\rho_2}}-1}{2+\rho_1+\overline{\rho_2}}\bigg)x^{1+\rho_1+\overline{\rho_2}}.
					\end{align*}
					Note that
					$|1+\rho_1+\overline{\rho_2}||2+\rho_1+\overline{\rho_2}|=\sqrt{6^2+13(\gamma_1-\gamma_2)^2+(\gamma_1-\gamma_2)^4}\geq 6+(\gamma_1-\gamma_2)^2$. \replaced{We then}{Then,} \replaced{use}{using} Lemma \ref{4_24pm}, \replaced{RH}{the Riemann hypothesis} and the estimate $2|ab|\leq |a|^2+|b|^2$ \replaced{to conclude}{it follows} that 
					\begin{align*} 
						\int_{1}^{\lambda}\left(\int_{x\nu/\lambda}^{\kappa x\nu}\left|A_\delta(y)\right|^2\dy\right)\dnu  & \leq \left(\kappa^2+\dfrac{1}{\lambda^2}\right)(\lambda^3+1) \displaystyle\sum_{\gamma_1}\displaystyle\sum_{\gamma_2} 
						\min\bigg\{\delta,\dfrac{\ell}{|\gamma_1|}\bigg\}\min\bigg\{\delta,\dfrac{\ell}{|\gamma_2|}\bigg\}\dfrac{1}{6+(\gamma_1-\gamma_2)^2}\,x^2 \\
						&	\leq \left(\kappa^2+\dfrac{1}{\lambda^2}\right)(\lambda^3+1) \displaystyle\sum_{\gamma_1} 
						\min\bigg\{\delta^2,\dfrac{\ell^2}{|\gamma_1|^2}\bigg\}\displaystyle\left(\sum_{\gamma_2}\dfrac{1}{6+(\gamma_1-\gamma_2)^2}\right)x^2.
					\end{align*}
					%		Here we have used that $|1+\rho_1+\overline{\rho_2}||2+\rho_1+\overline{\rho_2}|=\sqrt{6^2+13(\gamma_1-\gamma_2)^2+(\gamma_1-\gamma_2)^4}\geq 6+(\gamma_1-\gamma_2)^2$. 
					Using that $|\gamma|>14$, Lemma \ref{12_30am} and the symmetry of the zeros we have that
					\begin{align*} 
						\int_{1}^{\lambda}\left(\int_{x\nu/\lambda}^{\kappa x\nu}\left|A_\delta(y)\right|^2\dy\right)\dnu  & \leq  \frac{1}{2\sqrt{6}}\left(\kappa^2+\dfrac{1}{\lambda^2}\right)(\lambda^3+1) \displaystyle\sum_{\gamma} 
						\min\bigg\{\delta^2,\dfrac{\ell^2}{|\gamma|^2}\bigg\}\log|\gamma|x^2 \\
						& =\frac{1}{\sqrt{6}}\left(\kappa^2+\dfrac{1}{\lambda^2}\right)(\lambda^3+1) \left(\displaystyle\sum_{\gamma>0} 
						\min\bigg\{\delta^2,\dfrac{\ell^2}{\gamma^2}\bigg\}\log\gamma\right)x^2 \\
						& = \frac{1}{\sqrt{6}}\left(\kappa^2+\dfrac{1}{\lambda^2}\right)(\lambda^3+1) \left(\delta^2\displaystyle\sum_{0<\gamma\leq \ell/\delta} 
						\log\gamma+\ell^2\displaystyle\sum_{\gamma>\ell/\delta}
						\dfrac{\log\gamma}{{\gamma^2}}\right)x^2.
					\end{align*}
					Since $0<\delta\leq 10^{-13}$ we see that $\ell/\delta \geq 2\cdot 10^{13}$. Applying Lemma \ref{9_52pm} we arrive at
					\begin{align}  \label{11_19pm}
						\begin{split} 
							\int_{1}^{\lambda}\left(\int_{x\nu/\lambda}^{\kappa x\nu}\left|A_\delta(y)\right|^2\dy\right)\dnu  & \leq \dfrac{2.028}{2\sqrt{6}\pi}\left(\kappa^2+\dfrac{1}{\lambda^2}\right)(\lambda^3+1) \,\delta\,\ell\,\log^2\left(\dfrac{\ell}{\delta}\right)x^2\\
							& <\dfrac{2.0282}{\sqrt{6}\pi}\left(\kappa^2+\dfrac{1}{\lambda^2}\right)(\lambda^3+1) \,\delta\,\log^2\left(\dfrac{\ell}{\delta}\right)x^2,
						\end{split} 
					\end{align}
					where we used that $\ell<2.0001$.
					
					Now, let us analyze the double integral of $B_\delta(y)$. By the mean value theorem we get
					% Define the function $g(t)=\log(1-t^{-2})$. Using the mean value theorem we have for $y>1$ that 
					%$
					%g((1+\delta)y)-g(y)= \delta yg'(\xi)
					%$, for some $\xi\in (y,(1+\delta)y)$. Since $g'(t)=2/t(t^2-1)$ we get
					$$
					B_\delta(y)=\dfrac{1}{2}\left|\log(1-((1+\delta)y)^{-2})-\log(1-y^{-2})\right|\leq \dfrac{\delta y}{y(y^2-1)}\leq  \dfrac{2\delta}{y^2}
					$$
					where we have assumed that $y\geq \sqrt{2}$. Therefore, for $x\geq \sqrt{2}\lambda$ we have that
					\begin{align}  \label{11_19pm2}
						\begin{split} 
							\int_{1}^{\lambda}\left(\int_{x\nu/\lambda}^{\kappa x\nu}\left|B_\delta(y)\right|^2\dy\right)\dnu& \leq  \int_{1}^{\lambda}\left(\int_{x\nu/\lambda}^{\kappa x\nu}\dfrac{4\delta^2}{y^4}\dy\right)\dnu  = \dfrac{2}{3}\left(\lambda^3-\dfrac{1}{\kappa^3}\right)\left(1-\dfrac{1}{\lambda^2}\right)\delta^2x^{-3} \\
							& 	= \dfrac{2}{3}\left(\lambda^3-\dfrac{1}{\kappa^3}\right)\left(1-\dfrac{1}{\lambda^2}\right)\left(\dfrac{\delta x^{-5}}{\log^2(\ell/\delta)}\right)\delta\,\log^2\left(\dfrac{\ell}{\delta}\right)x^2 \\
							& < 1.3\cdot 10^{-17}\left(\lambda^3-\dfrac{1}{\kappa^3}\right)\left(1-\dfrac{1}{\lambda^2}\right)\delta\,\log^2\left(\dfrac{\ell}{\delta}\right)x^2,
							%\dfrac{(8\lambda^3-1)(\lambda^2-1)}{12\lambda^2}\delta^2x^{-3}.
						\end{split}
					\end{align} 
					where we used that 	$\delta\mapsto \ell/\delta$ is a decreasing function for $\delta>0$ and $x>\sqrt{2}$. Combining \eqref{11_19pm} and \eqref{11_19pm2} in \eqref{10_15pm} and then in \eqref{11_24pm} we get that for $\lambda>1$, $\kappa>1$, $\eta>0$ and $x\geq \sqrt{2}\lambda$:
					\begin{align}  \label{11_41pm}
						\int_{x}^{\kappa x}&\big(\psi((1+\delta)y)-\psi(y)-\delta y\big)^2\dy \leq \mathcal{C}(\kappa, \lambda, \eta)\cdot \delta\,\log^2\left(\dfrac{\ell}{\delta}\right)x^2,
					\end{align}
					where
					$$
					\mathcal{C}( \kappa, \lambda, \eta)=(1+\eta)\dfrac{2.0282}{\sqrt{6}\pi}\left(\kappa^2+\dfrac{1}{\lambda^2}\right)\left(\dfrac{\lambda^3+1}{\lambda-1}\right) +  1.3\cdot 10^{-17}\left(1+\frac{1}{\eta}\right)\left(\lambda^3-\dfrac{1}{\kappa^3}\right)\left(\dfrac{\lambda +1}{\lambda^2}\right).
					$$
					To reduce the notation, let us write $\mathcal{V}(\delta,y)=\psi((1+\delta)y)-\psi(y)-\delta y$. Consider $x\geq \max(10^{13},\sqrt{2}\lambda)$, and let $N\geq 0$ be the integer such that ${x}/{\kappa^{N+1}}<\sqrt{2}\lambda\leq {x}/{\kappa^{N}}$. Then,
					\begin{align} \label{3_12pm}
						\begin{split} 
							\int_{1}^{x}(\mathcal{V}(\delta,y))^2\dy & \leq \sum_{n=0}^{N}\int_{x/\kappa^{n+1}}^{x/\kappa^n}(\mathcal{V}(\delta,y))^2\dy  + \int_{1}^{\sqrt{2}\lambda}(\mathcal{V}(\delta,y))^2\dy.
						\end{split}
					\end{align} 
					Since $\mathcal{V}(\delta,y)=0$ for $y<\frac{2}{1+\delta}$, we bound the last term in the above sum as follows,
					\begin{align} \label{3_03pm}
						\begin{split}
							\int_{x/\kappa^{N+1}}^{x/\kappa^N}(\mathcal{V}(\delta,y))^2\dy & \leq 	\int_{\frac{2}{1+\delta}}^{\sqrt{2}\kappa\lambda}(\mathcal{V}(\delta,y))^2\dy   \leq \sum_{j=2}^{[\sqrt{2}\kappa\lambda]+1}
							\int_{\frac{j}{1+\delta}}^{j}(\mathcal{V}(\delta,y))^2\dy  + \sum_{j=2}^{[\sqrt{2}\kappa\lambda]}\int_{j}^{\frac{j+1}{1+\delta}}(\mathcal{V}(\delta,y))^2\dy.
						\end{split}
					\end{align}
					Let us bound each sum in the above expression. By \cite[Theorem 10]{Sch} we know the bound 
					$|\psi(y)-y|\leq\frac{1}{8\pi}{\sqrt{y}\log^2y}$ for $y\ge 73.2$. A numerical computation for the cases less than $73.2$ shows the estimate $|\psi(y)-y|\leq{2\sqrt{y}\log^2y}$ for $y\ge\frac{2}{1+\delta}$. Therefore, $(\mathcal{V}(\delta,y))^2\leq 2(\psi((1+\delta)y)-(1+\delta)y)^2+ 2(\psi(y)-y)^2\leq 16.01y\log^4y$ for $y\geq \frac{2}{1+\delta}$.
					%\leq 8y\log^4y\Big((1+\delta)\left(1+\frac{\log(1+\delta)}{\log y}\right)^4+1\Big)\leq 16.01y\log^4y$
					This implies that
					\begin{align*}
						\sum_{j=2}^{[\sqrt{2}\kappa\lambda]+1}
						\int_{\frac{j}{1+\delta}}^{j}(\mathcal{V}(\delta,y))^2\dy  & \leq\sum_{j=2}^{[\sqrt{2}\kappa\lambda]+1}\left(j-\dfrac{j}{1+\delta}\right)\sup_{y\in[2/(1+\delta),[\sqrt{2}\kappa\lambda]+1]}(\mathcal{V}(\delta,y))^2 \\
						& \leq 16.01([\sqrt{2}\kappa\lambda]+1)\log^4([\sqrt{2}\kappa\lambda]+1)\sum_{j=2}^{[\sqrt{2}\kappa\lambda]+1}\left(j-\dfrac{j}{1+\delta}\right) \\
						& =16.01([\sqrt{2}\kappa\lambda]+1)\log^4([\sqrt{2}\kappa\lambda]+1) \dfrac{([\sqrt{2}\kappa\lambda]+3)[\sqrt{2}\kappa\lambda]}{2}\dfrac{\delta}{1+\delta}: = \dfrac{\alpha(\kappa,\lambda)}{1+\delta}\cdot\delta.
					\end{align*} 
					On the other hand, note that for $j\leq y\leq \frac{j+1}{1+\delta}$, we have that $\psi((1+\delta)y)=\psi(y)$. Then
					$$
					\sum_{j=2}^{[\sqrt{2}\kappa\lambda]}\int_{j}^{\frac{j+1}{1+\delta}}(\mathcal{V}(\delta,y))^2\dy = \sum_{j=2}^{[\sqrt{2}\kappa\lambda]}\int_{j}^{\frac{j+1}{1+\delta}}(\delta y)^2\dy\leq \dfrac{([\sqrt{2}\kappa\lambda]+1)^3}{3}\cdot \delta^2:= \beta(\kappa,\lambda)\cdot \delta^2.
					$$
					Inserting these bounds in \eqref{3_03pm}, 
					\begin{align*}
						\int_{x/\kappa^{N+1}}^{x/\kappa^N}(\mathcal{V}(\delta,y))^2\dy\leq \dfrac{\alpha(\kappa,\lambda)}{1+\delta}\cdot\delta + \beta(\kappa,\lambda)\cdot \delta^2. 
					\end{align*}
					Now, assume that $\lambda<2$. Applying \eqref{11_41pm} for $x\geq 10^{13}$, we see that\footnote{If $N=0$ this sum is empty.}
					\begin{align*}
						\sum_{n=0}^{N-1}\int_{x/\kappa^{n+1}}^{x/\kappa^n}(\mathcal{V}(\delta,y))^2\dy &  \leq \mathcal{C}(\kappa,\lambda,\eta) \cdot\delta\,\log^2\left(\dfrac{\ell}{\delta}\right)\sum_{n=0}^{N-1}\left(\dfrac{x}{\kappa^{n+1}}\right)^2   \leq  \dfrac{\mathcal{C}(\kappa,\lambda,\eta)}{\kappa^2-1} \cdot\delta\,\log^2\left(\dfrac{\ell}{\delta}\right)x^2.
					\end{align*}
					Finally,
					\begin{align*}
						\int_{1}^{\sqrt{2}\lambda}(\mathcal{V}(\delta,y))^2\dy & \leq 	\int_{1}^{2.9}(\mathcal{V}(\delta,y))^2\dy  \leq \int_{\frac{2}{1+\delta}}^{2}(\mathcal{V}(\delta,y))^2\dy+\int_{2}^{\frac{3}{1+\delta}}(\mathcal{V}(\delta,y))^2\dy\\
						& \leq \dfrac{2(\log 2)^2\delta}{1+\delta} +\int_{2}^{\frac{3}{1+\delta}}\delta^2 y^2\dy < 0.961\delta.
					\end{align*}
					Combining the previous bounds in \eqref{3_12pm} we conclude for $1<\lambda<2$, $\kappa>1$, $\eta>0$ and $x\geq 10^{13}$,
					\begin{align*} 
						\int_{1}^{x}\big(\psi((1+\delta)y)-\psi(y)-\delta y\big)^2\dy & \leq \dfrac{\mathcal{C}(\kappa,\lambda,\eta)}{\kappa^2-1} \cdot\delta\,\log^2\left(\dfrac{\ell}{\delta}\right)x^2 + \left(\dfrac{\alpha(\kappa,\lambda)}{1+\delta} + 0.961+ \beta(\kappa,\lambda)\delta \right) \delta. 
					\end{align*} 
					Minimizing the expression ${\mathcal{C}(\kappa,\lambda,\eta)}/{(\kappa^2-1)}$ for $1<\lambda<2$, $\kappa>1$, $\eta>0$, we choose $\kappa=100$, $\lambda=1.677$ y $\eta=5\cdot 10^{-11}$. Then 
					$$
					\dfrac{\mathcal{C}(\kappa,\lambda,\eta)}{\kappa^2-1}= 2.22571\ldots, \,\,\,\, \dfrac{\alpha(\kappa,\lambda)}{1+\delta}\leq 9.8\cdot 10^{10}, \,\,\,\, \mbox{and} \,\,\,\, \beta(\kappa,\lambda)\leq 4.5\cdot 10^6.
					$$
					Since $0<\delta\leq 10^{-13}$, $\delta\mapsto \ell/\delta$ is a decreasing function for $\delta>0$, and $x\geq 10^{13}$ we conclude.
				\end{proof}
				
				\vspace{0.5cm}
				
				\section{An explicit bound for $J_\theta(x,\delta)$} \label{10_39pm22}
				
				As we mentioned in the introduction, we want an explicit bound for $J_\theta(x,\delta)$. Our goal in this section is to establish the following result.
				
				\begin{theorem}\label{thetaMeanValue}
					Assume \replaced{RH}{the Riemann hypothesis}. Then, for $x\geq 10^{13}$ and $\delta\in (0,10^{-13}]$ we have that\footnote{We highlight that one can prove that $J_{\theta}(x,\delta)
						\leq  2.2259\cdot\delta\,\log^2\left(\frac{2.0001}{\delta}\right)x^2$, for $x$ sufficiently large and $\delta$ sufficiently small.}
					\begin{align*}  
						J_{\theta}(x,\delta)
						\leq  2.5571\cdot\delta\,\log^2\left(\dfrac{2.0001}{\delta}\right)x^2.
					\end{align*}
				\end{theorem}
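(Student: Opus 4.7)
The plan is to pass from the $J_\psi$ bound of Theorem~\ref{psiMeanValue} to a $J_\theta$ bound by controlling the prime-power defect $E(y) := \psi(y) - \theta(y) = \sum_{k\geq 2}\theta(y^{1/k})$. Writing
$$\theta((1+\delta)y) - \theta(y) - \delta y = \bigl[\psi((1+\delta)y) - \psi(y) - \delta y\bigr] - \bigl[E((1+\delta)y) - E(y)\bigr],$$
squaring, and applying the AM--GM inequality \eqref{7_10_29_04} with a parameter $\eta > 0$ yields
$$J_\theta(x,\delta) \leq (1+\eta)\, J_\psi(x,\delta) + \Bigl(1+\tfrac{1}{\eta}\Bigr)\int_1^x \bigl(E((1+\delta)y) - E(y)\bigr)^2 \, dy.$$
The first term is already controlled by Theorem~\ref{psiMeanValue}, so the whole task is an explicit bound for the error integral.

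Only the $k = 2$ summand $\theta(\sqrt{(1+\delta)y}) - \theta(\sqrt{y})$ of $E((1+\delta)y) - E(y)$ contributes at the leading order: for $k \ge 3$ the intervals $(y^{1/k}, ((1+\delta)y)^{1/k}]$ are much shorter and yield faster-decaying prime sums. To bound the $k = 2$ contribution I would expand the square and swap the order of summation and integration by Fubini: for each prime $p$, the condition $p \in (\sqrt{y}, \sqrt{(1+\delta)y}]$ selects a $y$-interval of length $p^2\delta/(1+\delta)$. In the stated range $\delta \le 10^{-13}$, the interval $(\sqrt{y}, \sqrt{(1+\delta)y}]$ has length at most $\delta\sqrt{y}/2$, so for $y \le 16/\delta^2$ it contains at most one prime, eliminating the off-diagonal (pairs-of-primes) contribution on a very large initial segment. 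The diagonal contribution then reduces to a prime sum of the form $\sum_{p\le \sqrt{(1+\delta)x}} p^2(\log p)^2$, which can be bounded explicitly using the prime sum estimates collected in the Appendix (Section~\ref{6_46AM}).

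Carrying this through should give a bound of the form $\int_1^x (E((1+\delta)y) - E(y))^2 \, dy \leq C_0\, \delta\, x^2$ with an explicit constant $C_0$ (plus possibly a lower-order $\delta\, x^{3/2}\log x$ term). Since $\delta \leq 10^{-13}$ forces $\log^2(2.0001/\delta) \geq 30^2$, this error contribution is a small fraction of $J_\psi$, and the final inequality becomes
$$J_\theta(x,\delta) \leq \Bigl[(1+\eta)\cdot 2.2258 + \Bigl(1+\tfrac{1}{\eta}\Bigr)\cdot \frac{C_0}{\log^2(2.0001/\delta)}\Bigr]\delta\log^2\!\Bigl(\tfrac{2.0001}{\delta}\Bigr)x^2.$$
A suitable choice of $\eta$ then produces the claimed explicit constant $2.5571$; sending $\eta \to 0$ as $\delta \to 0$ and $x \to \infty$ is what recovers the asymptotic constant $2.2259$ mentioned in the footnote.

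The main obstacle is the explicit bookkeeping: the constants in the prime sum estimates from Section~\ref{6_46AM}, the treatment of the cross terms for $y > 16/\delta^2$ (where more than one prime could fall in a single interval), and the contribution of the $k \ge 3$ tail must all be made completely explicit and kept tight enough. Any slack in $C_0$ propagates through the $(1+1/\eta)$ factor and enlarges the final constant, which accounts for the gap between $2.2258$ and $2.5571$.
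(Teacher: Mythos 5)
The proof takes a genuinely different route from the paper. You split $J_\theta\le (1+\eta)J_\psi+(1+\tfrac1\eta)\int(\Delta E)^2$ and then attack $\int_1^x(E((1+\delta)y)-E(y))^2\,\dy$ directly by expanding the square, applying Fubini, and handling diagonal and off-diagonal prime pairs combinatorially. The paper instead applies \eqref{7_10_29_04} twice to first subtract the deterministic term $(1+\delta)^{1/2}y^{1/2}-y^{1/2}$, then invokes the Saffari--Vaughan Perron representation \eqref{10_30pm} of $\psi(y)-\theta(y)-\sqrt y+1$, and uses Plancherel's theorem to reduce the error integral to second-moment bounds on $F(t)$ (Lemmas \ref{secondmoment}, \ref{11_18pm}, numerical integration). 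Your route avoids the whole Fourier-analytic machinery, and since the Plancherel step $\int_0^{\log x}|\Delta_\delta(\nu)|^2e^{2\nu}\dnu\le x^2\int_0^\infty|\Delta_\delta(\nu)|^2\dnu$ is quite lossy, a tight direct estimate would plausibly even improve on the constant $2.5571$.

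However, as written the proposal has real gaps, and the parts you flag as ``the main obstacle'' are exactly where the difficulty lives. (i) The off-diagonal contribution for $y>16/\delta^2$ is not just bookkeeping: it reduces to sums $\sum_{p<q\le p\sqrt{1+\delta}}\log q=\theta(p\sqrt{1+\delta})-\theta(p)$, i.e.\ primes in intervals of length $\approx p\delta/2$, and you must feed in the explicit RH bound $|\theta(y)-y|\le\sqrt y\log^2y/8\pi$ of \cite{Sch} and track the resulting $\delta\,x^{7/4}\log^2x$ error term; this is the dominant contribution to $C_0$ in the range $x\approx10^{13}$, not the $\delta\,x^{3/2}\log x$ you name. (ii) The prime sums you need, of the shape $\sum_{p\le P}p^2\log^2p$ and $\sum_{p\le P}p^{5/2}\log^3p$, do not appear in Section~\ref{6_46AM} (which contains only convergent sums like $\sum_p\log p/(p^{1/2}(p-1))$ and $\sum_{n\le x}\Lambda^2(n)/n$); you would have to prove new explicit estimates. (iii) The $k\ge3$ tail and its cross terms with $k=2$, and the small-$y$ range below the threshold of \cite[Theorem 10]{Sch}, need separate treatment. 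None of these steps is carried out, nor is a value of $C_0$ produced, so the final constant is not actually established. In short: a viable and interesting alternative strategy, but the decisive quantitative work is missing, and it is precisely the work the paper's Plancherel machinery is designed to replace.
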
 
				The proof relies on comparing the integrals $J_{\theta}(x,\delta)$ and $J_{\psi}(x,\delta)$, and noting that their difference is a negligible error term. \added{At first such a passage sounds trivial: the difference is a sum supported on prime powers $p^{\ell}$ with $\ell\geq 2$, and should thus be negligable straight away. However, since we are working in the short interval $[y,(1+\delta)y]$, this naive approach seems to fall short. To bound the difference we} \deleted{We} follow \added{the method} Saffari–Vaughan\deleted{’s} \added{ presented in \cite[p. 22]{SV}} \deleted{method} only partially\deleted{, as presented in}. In their approach, the mentioned error term is bounded by  $O(\delta\,\log^2\left(\tfrac{1}{\delta}\right)x^2)$, i.e. the same as the main term. In our case, however, a more refined estimation of the error term is required\added{, to get the sharpest constant possible.} 
				
				%	\markus{I think we should add a comment here about what we instead bound the error term by.}

				\subsection{Proof of Theorem \ref{thetaMeanValue}: first step}
				\replaced{We start by applying \eqref{7_10_29_04} two times.}{The proof of Theorem \ref{thetaMeanValue} starts \added{by} applying Lemma \ref{7_10_29_04} two times.} Thus, for any $\eta>0$
				\begin{align*}
					J_\theta(x,\delta)
					& \leq \left(1+{\eta}\right)J_{\psi}(x,\delta) + \left(1+\frac{1}{\eta}\right)\int_1^x \big(\psi((1+\delta)y) - \psi(y)-\theta((1+\delta)y)+\theta(y)\big)^2\dy  \\
					&\leq \left(1+{\eta}\right)J_{\psi}(x,\delta)  +\left(1+\frac{1}{\eta}\right)^2\int_1^x \left((1+\delta)^{\frac{1}{2}}y^{\frac{1}{2}}-y^{\frac{1}{2}}\right)^2 \dy  \\
					& \,\,\,\,\,\,\,\,\,\,\,\,\,+\dfrac{(1+\eta)^2}{\eta}\int_1^x \left(\psi((1+\delta)y)-\psi(y) - \theta((1+\delta)y)+\theta(y) - (1+\delta)^{\frac{1}{2}}y^{\frac{1}{2}}+y^{\frac{1}{2}}\right)^2 \dy\\
					&\leq \left(1+{\eta}\right)J_{\psi}(x,\delta)  +\left(1+\dfrac{1}{\eta}\right)^2\dfrac{\delta^2x^2}{2} \\
					& \,\,\,\,\,\,\,\,\,\,\,\,\,+\dfrac{(1+\eta)^2}{\eta}\int_1^x \left(\psi((1+\delta)y)-\psi(y) - \theta((1+\delta)y)+\theta(y) - (1+\delta)^{\frac{1}{2}}y^{\frac{1}{2}}+y^{\frac{1}{2}}\right)^2 \dy.
				\end{align*}
				Thus, we need to bound the last integral on the right-hand side of the above expression. By a change of variable $y=e^\nu$, we have that this integral is exactly
				$$
				\int_0^{\log x} |\Delta_\delta(\nu)e^{\nu}|^2\dnu,
				$$
				where $\Delta_\delta(\nu)\coloneq \left(\psi((1+\delta)e^{\nu})-\psi(e^{\nu})-\theta((1+\delta)e^{\nu})+\theta(e^{\nu})-(1+\delta)^{\frac{1}{2}}e^{\frac{\nu}{2}}+e^{\frac{\nu}{2}}\right)e^{-\frac{\nu}{2}}$.
				Therefore, 
				\begin{align} \label{1_49am_30_04}
					\begin{split} 
						J_\theta(x,\delta)
						& \leq \left(1+{\eta}\right)J_{\psi}(x,\delta) + \left(1+\dfrac{1}{\eta}\right)^2\dfrac{\delta^2x^2}{2}+\dfrac{(1+\eta)^2}{\eta}\int_0^{\log x} |\Delta_\delta(\nu)e^{\nu}|^2\dnu \\
						&\leq  \left(1+{\eta}\right)J_{\psi}(x,\delta) + \left(1+\dfrac{1}{\eta}\right)^2\dfrac{\delta^2x^2}{2}+\dfrac{(1+\eta)^2}{\eta}\,x^2\int_0^{\infty} |\Delta_\delta(\nu)|^2\dnu. 
					\end{split} 
				\end{align}
				To bound the last integral in \eqref{1_49am_30_04}, we \added{shall} use Plancherel's theorem. \replaced{By Perron's formula we have}{In fact}, for $y\geq 1$ and $y \not \in \mathbb{Z}$, \replaced{that}{we have} (see \cite[p. 22]{SV})
				\begin{align} \label{10_30pm}
					\psi(y)-\theta(y)-y^{\frac{1}{2}}+1 = \frac{1}{2\pi i}\lim_{T\to \infty} \int_{\frac{1}{2}-iT}^{\frac{1}{2}+iT} \left(-\frac{\zeta'}{\zeta}(2s)-\frac{1}{2s-1}+\sum_p \frac{\log p}{p^s(p^{2s}-1)}\right)\frac{y^s}{s}\ds.
				\end{align}
				Letting 
				\begin{align} \label{11_37pm}
					F(t)\coloneq -\frac{\zeta'}{\zeta}(1+2it)-\frac{1}{2it}+\sum_p \frac{\log p}{p^{\frac{1}{2}+it}(p^{1+2it}-1)},
				\end{align} 
				by \eqref{10_30pm}, we have the following equality for almost every $\nu\geq 0$,
				$$
				\Delta_\delta(\nu) = \frac{1}{2\pi} \lim_{T\to \infty}\int_{-T}^T F(t)\left(\dfrac{(1+\delta)^{\frac{1}{2}+it}-1}{\frac{1}{2} +it}\right)e^{\nu it}\dt.
				$$
				By Lemma \ref{7_23AM} and \eqref{7pm}, $F$ is continuous on $\R$ and $F(t)=O(\log t)$. Then, we conclude that the integrand belongs to $L^2(\R)$. By Fourier inversion formula and Plancherel's theorem we obtain that
				$$\int_0^\infty |\Delta_\delta(\nu)|^2\dnu \leq \frac{1}{2\pi} \int_{-\infty}^\infty \left|F(t)\left(\dfrac{(1+\delta)^{\frac{1}{2}+it}-1}{\frac{1}{2} +it}\right)\right|^2\dt.$$
				\replaced{We now split up the integral on the right hand side.}{Then, by} \added{By} Lemma \ref{4_24pm} and the fact that $|F(t)|=|F(-t)|$ we get
				\begin{align*}
					\int_0^\infty |\Delta_\delta(\nu)|^2\dnu \leq  \frac{\delta^2}{\pi} \int_{0}^{\frac{\ell}{\delta}} \left|F(t)\right|^2\dt +  \frac{\ell^2}{\pi} \int_{\frac{\ell}{\delta}}^\infty\dfrac{|F(t)|^2}{t^2}\dt.  
				\end{align*}
				Therefore, by Theorem \ref{psiMeanValue} in \eqref{1_49am_30_04} we arrive at \begin{align} \label{1_57am_30_04}
					\begin{split}
						\!\!J_\theta(x,\delta)
						\leq  & \left(1+{\eta}\right)\cdot2.2258\cdot\delta\,\log^2\left(\dfrac{2.0001}{\delta}\right)x^2 + \left(1+\dfrac{1}{\eta}\right)^2\dfrac{\delta^2x^2}{2}\\
						& \,\,\,\,\,\, +\dfrac{(1+\eta)^2\delta^2x^2}{\eta\pi}\int_{0}^{\frac{\ell}{\delta}} \left|F(t)\right|^2\dt +  \dfrac{(1+\eta)^2\ell^2x^2}{\eta\pi} \int_{\frac{\ell}{\delta}}^\infty\dfrac{|F(t)|^2}{t^2}\dt.  
					\end{split} 
				\end{align}

				\subsection{The error term} 
				In order to complete the proof of Theorem \ref{thetaMeanValue}, it remains to verify that all terms on the right-hand side of \eqref{1_57am_30_04}, beginning with the second, are suitably small. It is immediate that \(\frac{\delta^2x^2}{2}\) is negligible. Our next task is to estimate the integrals related to \(F(t)\). 
				From \eqref{11_37pm}, \eqref{7_10_29_04} and (1) in Lemma \ref{Lemmaprime1}, for any $a\geq b\geq0$ and $\eta_1>0$:
				\begin{align} \label{11_26pm}
					\begin{split} 
						\int_{a}^b|F(t)|^2\,\dt  &   \leq \left(1+\dfrac{1}{\eta_1}\right)\int_{a}^b\left|\frac{\zeta'}{\zeta}(1+2it)+\frac{1}{2it}\right|^2\!\!\dt+ (1+\eta_1)\int_{a}^b\left|\sum_p \frac{\log p}{p^{\frac{1}{2}+it}(p^{1+2it}-1)}\right|^2\!\!\dt\\
						& \leq \dfrac{1}{2}\left(1+\dfrac{1}{\eta_1}\right)\int_{2a}^{2b}\left|\frac{\zeta'}{\zeta}(1+it)+\frac{1}{it}\right|^2\!\!\dt+2.9636(1+\eta_1)(b-a).
					\end{split} 
				\end{align} 
				%\footnote{One can prove that asymptotically we have the following expression:
					%	$
					%	\displaystyle\int_{0}^T\left|\frac{\zeta'}{\zeta}(1+2it)+\frac{1}{2it}\right|^2\dt \sim 	\displaystyle\sum_{n=2}^\infty\dfrac{\Lambda^2(n)}{n^2}{T}.
					%	$ 
					\subsubsection{The first integral in \eqref{1_57am_30_04}} Here we split as follows
					\begin{align} \label{12_56am}
						\int_{0}^{\frac{\ell}{\delta}} \left|F(t)\right|^2\dt = \int_{0}^{\frac{1}{4}} \left|F(t)\right|^2\dt + \int_{\frac{1}{4}}^{\frac{10^4}{2}} \left|F(t)\right|^2\dt + \int_{\frac{10^4}{2}}^{\frac{\ell}{\delta}} \left|F(t)\right|^2\dt:= I_1 + I_2 + I_3. 
					\end{align}
					To bound $I_1$, we apply \eqref{11_26pm} with $a=0$, $b=\frac{1}{4}$, Lemma \ref{11_18pm} and $\eta_1=1.5307$ obtaining $I_1\leq4.8$. To bound $I_2$, we use computational methods\footnote{More specifically, we have checked this numerically in python with mpmath.quad and mpmath.zeta from the mpmath-package.} to get
					\begin{align*}
						\int_{\frac{1}{2}}^{10^4}\left|\frac{\zeta'}{\zeta}(1+it)+\frac{1}{it}\right|^2\dt  \leq  8400.
					\end{align*}
					Then, applying \eqref{11_26pm} with $a=\frac{1}{4}$, $b=\frac{10^4}{2}$ and $\eta_1=0.5324$ we obtain $I_2\leq 34794.8$. Finally, to bound $I_3$, we start applying \eqref{7_10_29_04} with $\eta_2=10^{-8}$ and Lemma \ref{secondmoment} with $T=\frac{2\ell}{\delta}\geq 4\cdot 10^{13}$ to see that
					\begin{align*}
						\int_{10^4}^{\frac{2\ell}{\delta}}\left|\frac{\zeta'}{\zeta}(1+it)+\frac{1}{it}\right|^2\dt \leq (1+10^{-8})\int_{10^4}^{\frac{2\ell}{\delta}}\left|\frac{\zeta'}{\zeta}(1+it)\right|^2\!\!\dt + (1+10^{8})\int_{10^4}^{\frac{2\ell}{\delta}}\,\dfrac{1}{t^2}\dt  \leq 1.6113\,\dfrac{\ell}{\delta},
					\end{align*}
					where we used that $\ell>2$. Thus, applying \eqref{11_26pm} with $a=\frac{10^4}{2}$, $b=\frac{\ell}{\delta}$ and $\eta_1=0.5213$ {we obtain} $I_3\leq 6.8597\frac{\ell}{\delta} - 22542.6$. Finally, in \eqref{12_56am} we get
					\begin{align}  \label{1_27am}
						\int_{0}^{\frac{\ell}{\delta}} \left|F(t)\right|^2\dt\leq 6.8598\,\dfrac{\ell}{\delta}.
					\end{align} 
					\subsubsection{The second integral in \eqref{1_57am_30_04}} For $t\geq \frac{\ell}{\delta}>2\cdot 10^{13}$, we apply \eqref{7_10_29_04} with $\eta_2=10^{-8}$ and Lemma \ref{secondmoment} with $T=2t$ to get
					\begin{align}  \label{1_20AM}
						\int_{\frac{2\ell}{\delta}}^{2t}\left|\frac{\zeta'}{\zeta}(1+iu)+\frac{1}{iu}\right|^2\du\leq (1+10^{-8})	\int_{\frac{2\ell}{\delta}}^{2t}\left|\frac{\zeta'}{\zeta}(1+iu)\right|^2\!\!\du + (1+10^{8})	\int_{\frac{2\ell}{\delta}}^{2t}\dfrac{1}{u^2}\du  \leq 1.6113\,t.
					\end{align}
					Now, since $F(t)=O(\log t)$ as $t\to\infty$, we can use integration by parts to arrive at
					\begin{align*}
						\int_{\frac{\ell}{\delta}}^\infty\dfrac{|F(t)|^2}{t^2}\dt = 2\int_{\frac{\ell}{\delta}}^\infty\left(\int_{\frac{\ell}{\delta}}^t|F(u)|^2\du\right)\dfrac{1}{t^3}\dt.
					\end{align*}
					\added{Now,}\deleted{Then, } in \eqref{11_26pm} \deleted{we }choose \deleted{with} $a=\frac{\ell}{\delta}$, $b=t$, $\eta_1=0.7373$, and \replaced{use}{using} \eqref{1_20AM} \added{, to} \deleted{we} see that
					\begin{align} \label{1_26am2}
						\begin{split} 
							\int_{\frac{\ell}{\delta}}^\infty\dfrac{|F(t)|^2}{t^2}\dt & \leq \left(1+\dfrac{1}{\eta_1}\right)\int_{\frac{\ell}{\delta}}^\infty\left(\int_{\frac{2\ell}{\delta}}^{2t}\left|\frac{\zeta'}{\zeta}(1+iu)+\frac{1}{iu}\right|^2\du\right)\dfrac{1}{t^3}\dt+5.9272(1+\eta_1)\int_{\frac{\ell}{\delta}}^\infty\left(t-\dfrac{\ell}{\delta}\right)\dfrac{1}{t^3}\dt \\
							& \leq 8.9454\,\dfrac{\delta}{\ell}.
						\end{split}
					\end{align} 
					
					Finally, \replaced{inserting}{we insert} \eqref{1_27am} and \eqref{1_26am2} in \eqref{1_57am_30_04}, \deleted{and} using that $\ell<2.0001$ and $\delta\leq 10^{-13}$,
					\begin{align*}
						\!\!J_\theta(x,\delta)
						& \leq   \left(1+{\eta}\right)\cdot2.2258\cdot\delta\,\log^2\left(\dfrac{2.0001}{\delta}\right)x^2 + \left(1+\dfrac{1}{\eta}\right)^2\dfrac{\delta^2x^2}{2} + \left(\dfrac{15.8052(1+\eta)^2\ell}{\eta\pi}\right)\delta x^2 \\
						& < \left[\left(1+{\eta}\right)\cdot2.2258 + \dfrac{31.612(1+\eta)^2}{\eta\pi\log^2\left({2.0001}\cdot{10^{13}}\right)} \right]\delta\,\log^2\left(\dfrac{2.0001}{\delta}\right)x^2 + \left(1+\dfrac{1}{\eta}\right)^2\dfrac{\delta^2x^2}{2}.
					\end{align*} 	
					\deleted{We optimize t}The expression in \added{the} brackets \added{is optimized by} choosing $\eta=0.0693$ \deleted{to obtain the desired result}.

					\section{Proof of Theorem \ref{6_35pm2}} \label{10_39pm2}
					We closely follow the argument in \cite[Section 4]{Dudek}, some of which is restated here for convenience of the reader.
					
					Let $a\in [10^{-13},1)$, and $x \geq  10^{13}$, Theorem \ref{thetaMeanValue} implies
					\begin{equation}\label{mainIneqWithIntervalaxx}
						\int_{ax}^x \big(\theta((1+\delta)y)-\theta(y)-\delta y\big)^2\dy 
						\leq  2.5571\cdot\delta\,\log^2\left(\dfrac{2.0001}{\delta}\right)x^2
					\end{equation}
					holds for any $\delta \in (0,10^{-13}]$. Assume there is no Goldbach number in the interval $(x,x+h]$ for any $1\leq h \leq x$ Following \emph{verbatim} \cite[Section 4]{Dudek}, this implies that $$\int_{ax}^x \big(\theta((1+\delta)y)-\theta(y)-\delta y\big)^2\dy>\frac{\delta^2 x^3}{3}\left(\frac{1}{8}-a^3\right),$$ under the assumption $\delta \leq \frac{h}{2x}$. By (\ref{mainIneqWithIntervalaxx}), we \added{then} have
					\begin{equation*}
						\frac{\delta^2 x^3}{3}\left(\frac18 - a^3\right) < 2.5571\,\delta \log^2\left(\frac{2.0001}{\delta}\right)x^2,
					\end{equation*} 
					\replaced{Choosing}{We choose} $\delta=h/(2x)$ with $h=C(\log x)^2$, with $C>1$\deleted{. Then}, we have
					\begin{align*}
						\frac{C(\log x)^2}{6}\left(\frac18 - a^3\right) &< 2.5571\log^2\left(\frac{2.0001\cdot 2x}{C(\log x)^2}\right) \\ 
						&= 2.5571(\log x)^2\left(1+\frac{1}{\log x}\log\left( \frac{4.0002}{C(\log x)^2}\right)\right)^2 \leq2.5571(\log x)^2. 
					\end{align*}
					\replaced{This inequality is contradicted}{We contradict this inequality} when $$C>\frac{6\cdot 2.5571}{\frac18 - a^3}.$$ \replaced{Choosing $a=10^{-13}$, this}{Minimizing this we find the smallest value at  This} implies we can take $C=122.75$, as long as ${122.75(\log x)^2}/{2x}\leq 10^{-13}$, which is true whenever $x\geq 1.1\cdot 10^{18}$. In \cite{Herzog}, Goldbach's conjecture is proven up to $4\cdot 10^{18}$, so this finishes the proof.

					\appendix
					\setcounter{section}{0}             % reinicia numeración
					\renewcommand{\thesection}{\Alph{section}}
					\section{Some sums over primes} \label{6_46AM}

					\begin{lemma} \label{Lemmaprime1} We have the following bounds:
						\begin{enumerate}
							\item $$ \sum_p \frac{\log p}{p^{\frac{1}{2}}(p-1)}<1.7215,$$
							\item $$ \sum_{n=1}^\infty\dfrac{\Lambda^2(n)}{n^2}<0.8053,$$
							\item $$\sum_{p}\dfrac{\log^2p}{p^2-p}<0.982.$$
						\end{enumerate}
					\end{lemma}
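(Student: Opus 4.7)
The proof plan for Lemma \ref{Lemmaprime1} is the same for all three estimates: split each sum at a suitably chosen prime bound $P$, evaluate the head numerically to high precision, and control the tail using partial summation combined with an explicit Chebyshev-type bound on $\theta(x)=\sum_{p\leq x}\log p$ (as in Rosser--Schoenfeld or Dusart).

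For item (2), the natural first step is to collapse the von Mangoldt support: writing $n=p^k$ gives
\begin{equation*}
\sum_{n=1}^{\infty}\frac{\Lambda^{2}(n)}{n^{2}}=\sum_{p}\log^{2}p\sum_{k=1}^{\infty}\frac{1}{p^{2k}}=\sum_{p}\frac{\log^{2}p}{p^{2}-1}.
\end{equation*}
This series decays like $\log^{2}p/p^{2}$, so after summing over primes $p\leq P$ for some moderate $P$ (a few thousand, say), the tail is estimated through partial summation from $\sum_{p\leq x}\log^{2}p\leq (\log x)\,\theta(x)\leq 1.01\,x\log x$ for $x$ large enough. Item (3) is handled identically since $\frac{1}{p(p-1)}<\frac{1}{p^{2}-1}\cdot\frac{p+1}{p-1}$ and only the prefactor changes; a partial sum up to $P$ plus a tail of the same shape yields the constant $0.982$.

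Item (1) is slightly more delicate because the general term is $\log p/p^{3/2}$ times a smaller correction. The clean way is to write
\begin{equation*}
\frac{\log p}{p^{1/2}(p-1)}=\frac{\log p}{p^{3/2}}+\frac{\log p}{p^{3/2}(p-1)},
\end{equation*}
so the second piece is absolutely summable and controlled by brute numerical evaluation. For the leading part $\sum_{p\leq P}\log p/p^{3/2}$ one again computes numerically, while for the tail one applies Abel summation:
\begin{equation*}
\sum_{p>P}\frac{\log p}{p^{3/2}}=\Bigl[\frac{\theta(x)}{x^{3/2}}\Bigr]_{P}^{\infty}+\frac{3}{2}\int_{P}^{\infty}\frac{\theta(x)}{x^{5/2}}\dx,
\end{equation*}
and inserts an explicit upper bound $\theta(x)\leq (1+\varepsilon)x$ valid for $x\geq P$. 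Choosing $P$ large enough makes the tail negligible and the numerical head dominates.

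The only real obstacle is bookkeeping: the claimed constants are close enough to the true values that the partial sum has to be carried out to sufficiently many primes and the tail bound has to be sharp enough (in particular one must use a modern explicit form of $\theta(x)\leq x+\mathrm{small}$ rather than the crude Chebyshev bound). In practice, computing the head to, say, $P=10^{5}$ and applying a Dusart-type estimate for $x\geq P$ comfortably certifies each of the three inequalities, after which a short numerical verification produces the stated decimal bounds $1.7215$, $0.8053$, and $0.982$.
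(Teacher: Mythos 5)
Your proposal uses the same overall philosophy as the paper — collapse the von Mangoldt sum to primes, split at a cutoff, compute the head numerically, and bound the tail analytically — but your tail-control mechanism is genuinely different. The paper bounds the tail by invoking the Rosser--Schoenfeld inequality $p_n > n\log n$, which lets them replace $\sum_{p > p_{n_0}} f(p)$ by $\sum_{n > n_0} f(n\log n)$ and then compare with the integral $\int_{n_0}^\infty f(x\log x)\,\dx$; this avoids any appeal to explicit Chebyshev-type bounds on $\theta$. You instead use Abel summation together with $\theta(x)\leq(1+\varepsilon)x$, which is the more textbook route and yields the same shape of error. Both are legitimate. Your algebra is fine: the decomposition $\tfrac{\log p}{p^{1/2}(p-1)}=\tfrac{\log p}{p^{3/2}}+\tfrac{\log p}{p^{3/2}(p-1)}$ checks out, and the identity $\sum\Lambda^2(n)/n^2=\sum_p\log^2 p/(p^2-1)$ is exactly the paper's. (Two small quibbles: for item~(3) the cleaner observation is $\tfrac{1}{p(p-1)}=\tfrac{1}{p^2-1}\cdot\tfrac{p+1}{p}$, not the lossier $\tfrac{p+1}{p-1}$ factor you wrote, though your inequality is still true; and your claim that $P=10^5$ ``comfortably'' certifies all three is optimistic for item~(1): the Abel tail bound $-\theta(P)P^{-3/2}+3(1+\varepsilon)P^{-1/2}$ is roughly $0.0063$ at $P=10^5$, and the target $1.7215$ leaves only a few parts in $10^4$ of slack, so you would need either a considerably larger $P$ or a sharper $\theta$ bound — the paper's cutoff is effectively around $5\times 10^8$ for a reason.) The trade-off is that the paper's route needs no input beyond $p_n>n\log n$ and a numerical integral, while your route requires an explicit $\theta$ estimate but should need fewer enumerated primes if the $\theta$ bound is strong.
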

					\begin{proof}
						To prove (1), we use the fact that $p_n>n\log n$ for all $n\geq1$, where $p_n$ is the n-th prime number (see \cite[Corollary, p. 69]{RSch}). Thus, letting $n_0=26355867$,
						\begin{align*}
							\sum_p \frac{\log p}{p^{\frac{1}{2}}(p-1)} & =\sum_{p\leq p_{n_0}} \frac{\log p}{p^{\frac{1}{2}}(p-1)} + \sum_{p> p_{n_0}} \frac{\log p}{p^{\frac{1}{2}}(p-1)} \\
							& < \sum_{p\leq p_{n_0}} \frac{\log p}{p^{\frac{1}{2}}(p-1)} + \sum_{n> {n_0}} \frac{\log (n\log n)}{(n\log n)^{\frac{1}{2}}(n\log n-1)} \\
							& < \sum_{p\leq p_{n_0}} \frac{\log p}{p^{\frac{1}{2}}(p-1)} + \int_{{n_0}}^\infty \frac{\log (x\log x)}{(x\log x)^{\frac{1}{2}}(x\log x-1)}\dx < 1.721381 + 0.000104 < 1.7215,
						\end{align*} 
						where the numerical bounds are evaluated computationally. With a similar approach we get (3). To prove (2), note that
						$$\sum_{n=1}^\infty \frac{\Lambda(n)^2}{n^2} = \sum_p \log^2 p \sum_{k=1}^\infty \frac{1}{p^{2k}} = \sum_{p} \log^2 p\left(\frac{1}{1-p^{-2}}-1\right) = \sum_p \frac{\log^2 p}{p^2-1}.$$
						Then, we bound this sum as in (1). 
					\end{proof}
					
					\begin{lemma} \label{Lemmaprime2} Assume RH. Then, for all $x\geq 10^{13}$ we have
						$$
						\sum_{n \leq x}\dfrac{\Lambda^2(n)}{n}\leq \dfrac{\log^2x}{2} + 4.5222.
						$$
						
					\end{lemma}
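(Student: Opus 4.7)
The plan is to evaluate the sum by separating the contributions of primes and of higher prime powers. The decomposition
\begin{align*}
\sum_{n\leq x}\frac{\Lambda(n)^2}{n} \;=\; \sum_{p\leq x}\frac{\log^2 p}{p} \;+\; \sum_{k\geq 2}\,\sum_{p^k\leq x}\frac{\log^2 p}{p^k}
\end{align*}
immediately disposes of the prime-power tail: the second sum is bounded by $\sum_{p}\log^2 p/(p^2-p)<0.982$ via Lemma~\ref{Lemmaprime1}(3). Thus the whole game is to produce the claimed $\frac{1}{2}\log^2 x$ plus an absolute constant from the prime sum alone.

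For $\sum_{p\leq x}(\log p)^2/p$, I would apply Abel summation against $\theta(t)=\sum_{p\leq t}\log p$ with the test function $\log t/t$. Integration by parts gives
\begin{align*}
\sum_{p\leq x}\frac{\log^2 p}{p} \;=\; \frac{\log x}{x}\,\theta(x) \;+\; \int_{2^-}^{x}\theta(t)\,\frac{\log t-1}{t^2}\,\dt.
\end{align*}
Writing $\theta(t)=t+R(t)$, the $t$-part produces exactly the main term $\frac{1}{2}\log^2 x$ together with a small explicit constant from the lower limit of integration, while the $R$-part carries all of the arithmetic difficulty.

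To control the $R$-part, I would split the integral at Schoenfeld's cutoff $t=73.2$. On $[2,73.2]$ I would bound $R(t)$ directly from known numerical values of $\theta(t)$, giving a bounded contribution computable in closed form. On $[73.2,x]$ I would apply the same conditional bound $|R(t)|\leq \sqrt{t}\log^2 t/(8\pi)$ already used in Lemma~\ref{7_23AM}, which reduces matters to a convergent tail integral of the type $\int_{73.2}^{\infty}(\log^3 t-\log^2 t)/t^{3/2}\,\dt$, yielding another explicit constant. The boundary contribution $\log x\cdot R(x)/x$ is $O(\log^3 x/\sqrt{x})$ and is entirely negligible for $x\geq 10^{13}$.

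The main obstacle is purely numerical bookkeeping: one must assemble the four explicit constants (the lower boundary of Abel summation, the prime-power tail $0.982$, the small-$t$ evaluation of the $\theta$-integral, and the Schoenfeld tail integral) and verify that they sum to at most $4.5222$. Since the asymptotic constant is considerably smaller than $4.5222$, the bound leaves comfortable slack and no delicate optimization should be needed.
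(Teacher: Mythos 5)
Your plan follows essentially the same route as the paper: both split off the prime-power contribution via Lemma~\ref{Lemmaprime1}(3) to obtain the $0.982$, then apply Abel summation to $\sum_{p\leq x}(\log^2 p)/p$ and invoke Schoenfeld's conditional estimate on $\theta(y)-y$ to control the resulting error integral. The one technical variation is in the small-$t$ treatment: you propose splitting at $t=73.2$ and handling $[2,73.2]$ numerically, whereas the paper uses the one-sided form $\theta(y)<y+\sqrt{y}\log^2y/(8\pi)$, which Schoenfeld proves for \emph{all} $y>0$, and exploits the positivity of $(-\log y/y)'$ for $y\geq 3$ to insert that bound directly into the integral starting from $y=3$, avoiding any case analysis. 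One caution on your closing remark: along the paper's route the bookkeeping is tight, not slack — their prime-sum constant is $3.5402$ and $3.5402+0.982=4.5222$ exactly. Your slightly more careful treatment of $[2,73.2]$ (where $\theta(t)-t<0$ and the integrand is negative for $t\in(e,73.2)$) would indeed recover some margin, but that margin should be verified numerically rather than assumed.
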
 
					\begin{proof}
						Using (3) from Lemma \ref{Lemmaprime1},
						\begin{align}  \label{1_40am}
							\begin{split} 
								\sum_{n\leq x} \frac{\Lambda^2(n)}{n}& \leq \sum_{p\leq x} \frac{\log^2p}{p} + \sum_{p\leq \sqrt{x}}\log^2p\sum_{k=2}^\infty\dfrac{1}{p^k}= \sum_{p\leq x} \frac{\log^2p}{p}  + \sum_{p}\dfrac{\log^2p}{p^2-p}< \sum_{p\leq x} \frac{\log^2p}{p} + 0.982.
							\end{split}
						\end{align} 
						To bound the sum on the right hand-side of \eqref{1_40am} we use integration by parts and the bound $\theta(y)<y+\sqrt{y}\log^2y/8\pi$, for $y>0$ (by \cite[Theorem 10, Eq (6.5)]{Sch}), where $\theta(y)=\sum_{p\leq y}\log p$. Thus
						\begin{align*}
							\sum_{p\leq x} \frac{\log^2p}{p}  & =\frac{\log^22}{2}+\dfrac{\log x}{x}\theta(x)-\dfrac{\log 3}{3}\theta(3^{-})+\int_{3}^x\left(-\dfrac{\log y}{y}\right)'\theta(y)\dy \\
							& < \frac{\log^22}{2}-\dfrac{\log3\log 2}{3}+\log 3+\int_{3}^x\dfrac{\log y}{y}\dy+\dfrac{\log^3x}{8\pi\sqrt{x}}+\int_{3}^x\left(-\dfrac{\log y}{y}\right)'\dfrac{\sqrt{y}\log^2y}{8\pi}\dy\\
							&< \frac{\log^22}{2}-\dfrac{\log3\log 2}{3}+\log 3+\dfrac{\log^2x}{2}-\dfrac{\log^23}{2}+\dfrac{\log^3x}{8\pi\sqrt{x}}+\int_{3}^{\infty}\left(-\dfrac{\log y}{y}\right)'\dfrac{\sqrt{y}\log^2y}{8\pi}\dy \\
							& < \dfrac{\log^2x}{2} +  3.5401 +  \dfrac{\log^3x}{8\pi\sqrt{x}} < \dfrac{\log^2x}{2} +  3.5402,
						\end{align*}
						where in the final inequality we used that $x\geq 10^{13}$. Inserting this in \eqref{1_40am} we arrive at the desired result.
					\end{proof}


\begin{thebibliography}{9999}
						
						%\bibitem{Trudgian2}
						%T.S. Trudgian,
						%\newblock  Explicit bounds on the logarithmic derivative and the reciprocal of the {R}iemann zeta-function,
						%\newblock Funct. Approx. Comment. Math. 52, 253--261 (2015).
						
						\bibitem{Backlund}
						R. J. Backlund, 
						\newblock \emph{\"Uber die Nullstellen der Riemannschen Zetafunktion}, Acta Math. 41 (1916), no. 1, 345–375.
						
						\bibitem{Berndt}
						B. C. Berndt, 
						\newblock \emph{On the Hurwitz zeta-function}, Rocky Mountain J. Math. 2 (1972), no. 1, 151–157.
						
						
						
						\bibitem{Trudgian1}
						R. P. Brent, D. J. Platt, T. S. Trudgian, 
						\newblock \emph{The mean square of the error term in the prime number theorem},
						\newblock J. Number Theory 238 (2022), 740–762.
						
						
						\bibitem{Chandee}
						V. Chandee, 
						\newblock \emph{Explicit upper bounds for L-functions on the critical line}, 
						\newblock Proc. Amer. Math. Soc. 137 (2009), no. 12, 4049–4063.
						
						
						
						\bibitem{ChiVaSI}
						A.~Chirre, M.~V.~Hagen and A.~Simoni\v{c},
						\newblock \emph{Conditional estimates for the logarithmic derivative of Dirichlet $L$-functions},
						\newblock Indag. Math. (N.S.) 35 (2024), no. 1, 14–27.
						
						
						\bibitem{Dona}
						D. Dona, H. A. Helfgott, and S. Zuniga Alterman. 	\newblock \emph{Explicit $L^2$ bounds for the Riemann $\zeta$ function},
						\newblock  J. Théor. Nombres Bordeaux 34 (2022), no. 1, 91–133.
						
						
						\bibitem{Dudek}
						M. Cully-Hugill and A. Dudek,
						\newblock \emph{An explicit mean-value estimate for the prime number theorem in intervals},
						\newblock J. Aust. Math. Soc. 117 (2024), no. 1, 1–15.
						
						%\bibitem{Dav}
						%H. Davenport,
						%\newblock {\it Multiplicative number theory},
						%\newblock Third edition, Graduate Texts in Mathematics 74, Springer-Verlag, New York (2000). 
						
						
						
						%bibitem{G} 
						%D. A. Goldston,
						%\newblock On the pair correlation conjecture for zeros of the Riemann zeta-function, 
						%\newblock J. reine angew. Math.  385 (1988), 24-40.
						
						\bibitem{Gallagher}
						P. X. Gallagher,
						\newblock \emph{A large sieve density estimate near {$\sigma =1$}},
						\newblock Invent. Math. 11 (1970), 329–339.
						
						
						%\bibitem{GOLD} 
						%D. A. Goldston, 
						%\newblock \emph{Linnik’s theorem on Goldbach numbers in short intervals},
						%\newblock Glasgow Math. J. {\bf 32} no. 3 (1990), 285--297.
						
						
						
						
						%\bibitem{GGM} 
						%D. A. Goldston, S. M. Gonek, and H. L. Montgomery,
						%\newblock Mean values of the logarithmic derivative of the Riemann zeta function with applications to primes in short intervals, 
						%\newblock J. Reine Angew. Math 537 (2001), 105--126.
						
						
						%\bibitem{GM} 
						%D. A. Goldston and H. L. Montgomery,
						%\newblock \emph{Pair correlation of zeros and primes in short intervals}, 
						%\newblock Analytic Number Theory and Diophantine Problems (Stillwater, OK, July 1984), Progress in Mathematics 70 (Birkh\"auser, Boston, 1987) 183--203.
						
						%\bibitem{HardyLittle%wood}
						%G. H. Hardy and J. E. Littlewood,
						%\newblock \emph{Some {P}roblems of '{P}artitio {N}umerorum'({V}): {A}
							%	{F}urther {C}ontribution to the {S}tudy of {G}oldbach's
							%	{P}roblem},
						%\newblock Proc. London Math. Soc. (2) {\bf 22} (1924), 46--56.
						
						\bibitem{HaraldG} 
						H. A. Helfgott, 
						\newblock \emph{The ternary Goldbach problem},
						\newblock Second preliminary version. To appear in Ann. of Math. Studies, available at  {\url{https://webusers.imj-prg.fr/~harald.helfgott/anglais/book.html}}.
						
						\bibitem{Herzog}
						S. Herzog, T. Oliviera e Silva, S. Pardi,
						\newblock \emph{Empirical verification of the even {G}oldbach conjecture and
							computation of prime gaps up to {$4\cdot 10^{18}$}},
						\newblock Math. Comp. 83 (2014), no. 288, 2033–2060.
						
						\bibitem{JRT}
						D. R. Johnston, O. Ramaré and T. Trudgian,
						\newblock An explicit upper bound for $L(1,\chi)$ when $\chi$ is quadratic,
						\newblock Res. Number Theory 9 (2023), no. 4, Paper No. 72, 20 pp.
						%\newblock preprint at https://arxiv.org/abs/2303.13785.
						
						
						\bibitem{KATAI}
						I. Kátai, 
						\newblock \emph{A comment on a paper of Ju. V. Linnik}, 
						\newblock Magyar Tud. Akad. Mat. Fiz. Oszt. Közl. 17 (1967), 99–100.
						
						%\bibitem{PERELLI}
						%A. Languasco and A. Perelli, 
						%					\newblock \emph{On Linnik’s theorem on Goldbach numbers in short
							%					intervals and related problems}, 
						%			\newblock Ann. Inst. Fourier (Grenoble) {\bf 44} no. 2 (1994), 307–322.
						
						%\bibitem{Lavrik}
						%A. F. Lavrik, 
						%\newblock On the main term of the divisor's problem and the power series of the {R}iemann's zeta function in a neighbourhood of its pole (in Russian), 
						%\newblock Trudy Mat. Inst. Akad. Nauk. SSSR 142, 165--173, 1976.
						
						\bibitem{Linnik}
						Yu. V. Linnik,
						\newblock \emph{Some conditional theorems concerning the binary {G}oldbach
							problem},
						\newblock Izv. Akad. Nauk SSSR Ser. Mat. 16 (1952), 503–520.
						
						\bibitem{MV2}
						H. L. Montgomery and R. C. Vaughan, 
						\newblock \emph{The exceptional set in Goldbach’s problem},
						\newblock Acta Arith. 27 (1975), 353–370.
						
						
						
						\bibitem{MV}
						H. L. Montgomery and R. C. Vaughan,
						\newblock {\it Multiplicative Number Theory: I. Classical Theory},
						\newblock Cambridge Studies in Advanced Mathematics 97, Cambridge University Press, 2006.
						
						%\bibitem{OZ}
						%O. Ramaré and S. Zuniga,
						%	\newblock \emph{Möbius function and primes: an identity factory with applications},
						%	\newblock Preprint at arXiv:2312.05138.
						
						
						\bibitem{RSch}
						J. B. Rosser and L. Schoenfeld, 
						\newblock Approximate formulas for some functions  of prime numbers,
						\newblock Illinois J. Math. 6 (1962), 64–94.
						
						
						
						
						\bibitem{SV}
						B. Saffari and R. C. Vaughan,
						\newblock \emph{On the fractional parts of x/n related sequences. II},
						\newblock Ann. Inst. Fourier (Grenoble) 27 (1977), no. 2, v, 1–30.
						
						
						\bibitem{S} 
						A. Selberg,
						\newblock \emph{On the normal density of primes in small intervals, and the difference between consecutive primes},
						\newblock Arch. Math. Naturvid. 47 (1943), no. 6, 87–105.
						
						\bibitem{Sch}
						L. Schoenfeld, 
						\newblock Sharper bounds for the Chebyshev functions $\theta(x)$ and $\psi(x)$. II,
						\newblock  Math. Comp. 30 (1976), no. 134, 337–360.
						
						%\bibitem{T}
						%E. C. Titchmarsh,
						%\newblock The Theory of the Riemann Zeta-function,
						%\newblock 2nd ed., revised by D. R. Heath-Brown (Oxford Science Publications, 1986).
						
						
						
						
					\end{thebibliography}
				\end{document}